\newcommand{\abs}[1]{\lvert#1\rvert}
\newcommand{\Abs}[1]{\left|#1\right|}
\DeclareMathOperator{\Lapl}{\Delta}
\newcommand{\norm}[1]{\lVert#1\rVert}
\newcommand{\norma}[2]{\norm{#1}_{#2}}
\newcommand{\SpDim}{N}
\newcommand{\numberspacefont}{\boldsymbol}
\newcommand{\R}{\numberspacefont{R}}
\newcommand{\RN}{\R^{\SpDim}}
\newcommand{\N}{\numberspacefont{N}}
\newcommand{\Z}{\numberspacefont{Z}}
\newcommand{\Pposbase}[3][*]{\ifthenelse{\equal{#1}{*}}%
{(#2)_{#3}}{\left(#2\right)_{#3}}}
\newcommand{\ppos}[1]{\Pposbase[*]{#1}{+}}
\newcommand{\di}{\,\textup{\textmd{d}}}
\newcommand{\eps}{\varepsilon}
\newcommand{\unk}{u}
\newcommand{\unkii}{v}
\newcommand{\unkiii}{w}
\newcommand{\wgt}{\omega}
\newcommand{\dgw}{d_{\wgt}}
\newcommand{\pder}[2]{\frac{\partial #1}{\partial #2}}
\newcommand{\fkf}{\Lambda_p}
\newcommand{\msw}{\mu_{\wgt}}
\newcommand{\dif}{D}
\newcommand{\floor}[1]{\lfloor#1\rfloor}
\newcommand{\mcut}{m}
\newcommand{\Mcut}{M}
\newcommand{\gnor}{\dif}
\newcommand{\dcf}{\psi}
\newcommand{\slt}{T}
\newcommand{\rdf}{\mathcal{R}}
\begin{document}

\title
{Asymptotic estimates for the $p$-Laplacian on infinite graphs\\with decaying initial data}%
%% ppt
\author{Daniele Andreucci}
\address{Department of Basic and Applied Sciences for Engineering\\Sapienza University of Rome, Italy}
\email{daniele.andreucci@sbai.uniroma1.it}
\thanks{The first author is member of the Gruppo Nazionale
   per la Fisica Matematica (GNFM) of the Istituto Nazionale di Alta Matematica
   (INdAM)}
\author{Anatoli F. Tedeev}
\address{South Mathematical Institute of VSC RAS\\Vladikavkaz, Russian Federation}
\email{a\_tedeev@yahoo.com}
\thanks{The second author was supported by Sapienza Grant C26V17KBT3}

%% for Potential Analysis
% \titlerunning{$p$-Laplacian on infinite graphs}
% \author{Daniele Andreucci\thanks{The first author is member of the Gruppo Nazionale
%     per la Fisica Matematica (GNFM) of the Istituto Nazionale di Alta Matematica
%     (INdAM)} \and Anatoli F. Tedeev\thanks{The second author was supported by Sapienza Grant C26V17KBT3}}
% \institute{Daniele Andreucci \at Department of Basic and Applied Sciences for Engineering\\Sapienza University of Rome, Italy \\\email{daniele.andreucci@sbai.uniroma1.it}\\Phone: +390649766785\and
%   Anatoli F. Tedeev \at South Mathematical Institute of VSC RAS\\Vladikavkaz, Russian Federation \\\email{a\_tedeev@yahoo.com}}

\begin{abstract}
  We consider the Cauchy problem for the evolutive discrete $p$-Laplacian in infinite graphs, with initial data decaying at infinity.
  We prove optimal sup and gradient bounds for nonnegative solutions, when the initial data has finite mass, and also sharp evaluation for the confinement of mass, i.e., the effective speed of propagation. We provide estimates for some moments of the solution, defined using the distance from a given vertex.

  Our technique relies on suitable inequalities of Faber-Krahn type, and looks at the local theory of continuous nonlinear partial differential equations. As it is known, however, not all of this approach can have a direct counterpart in graphs.  A basic tool here is a result connecting the supremum of the solution at a given positive time with the measure of its level sets at previous times.

  We also consider the case of slowly decaying initial data, where the total mass is infinite.
  \keywords{graphs \and $p$-Laplacian \and asymptotics for large times \and Cauchy problem \and Faber-Krahn inequality \and estimates from above and below}
%% for Potential Analysis
% \subclass{35R02 \and 58J35 \and 39A12}
%% ppt
  \subjclass{35R02 \and 58J35 \and 39A12}
\end{abstract}

\maketitle
% \ifx\Versione\UnDeFiNeD\else
% \begin{center}
%   \Versione
% \end{center}
% \fi

% \tableofcontents

\section{Introduction}\label{s:intro}

We consider nonnegative solutions to the Cauchy problem for discrete
degenerate parabolic equations
\begin{alignat}{2}
  \label{eq:pde}
  \pder{\unk}{t}(x,t)
  -
  \Lapl_{p}
  \unk (x,t)
  &=
  0
  \,,
  &\qquad&
  x\in V\,, t>0
  \,,
  \\
  \label{eq:init}
  \unk(x,0)
  &=
  \unk_{0}(x)
  \ge 0
  \,,
  &\qquad&
  x\in V
  \,.
\end{alignat}
Here $V$ is the set of vertices of the graph $G(V,E)$ with edge set $E\subset V\times V$ and weight $\wgt$, and
\begin{equation*}
  \Lapl_{p} u(x,t)
  =
  \frac{1}{\dgw(x)}
  \sum_{y\in V}
  \abs{\unk(y)-\unk(x)}^{p-2}
  (\unk(y)-\unk(x))
  \wgt(x,y)
  \,.
\end{equation*}
We assume that the graph $G$ is simple, undirected, infinite, connected with locally finite degree
\begin{equation*}
  \dgw(x)
  =
  \sum_{y\sim x}
  \wgt(x,y)
  \,,
\end{equation*}
where we write $y\sim x$ if and only if $\{x,y\}\in E$.
Here the weight $\wgt:V\times V\to [0,+\infty)$ is symmetric, i.e.,
$\wgt(x,y)=\wgt(y,x)$, and is strictly positive if and only if
$y\sim x$; then $\wgt(x,x)=0$ for $x\in V$.

We assume also that $p>2$ and that $\unk_{0}$ is nonnegative; further assumptions on $\unk_{0}$ will be stated below.

We prove sharp sup bounds for large times of solutions corresponding
to finite mass initial data; in order to prove the bound from below we
find an optimal estimate for the effective speed of propagation of
mass.  We also determine the stabilization rate for data exhibiting
slow decay `at infinity', in a suitable sense. To the best of our
knowledge such results are new in the framework of discrete nonlinear
diffusion equations on graphs.
\\
We apply an alternative approach, more local than the one in
\cite{Mugnolo:2013}, \cite{Hua:Mugnolo:2015} where the global
arguments of semigroup theory are extended to graphs, actually in a
more general setting which is out of the scope of this paper. We
comment below in the Introduction on the inherent difficulty and even
partial unfeasibility of a local approach in graphs. It is therefore
an interesting and not trivial problem to understand how much of this
body of techniques can be used in this environment. This paper can be
seen as a cross section of this effort; specifically we look at the
interplay between spread of mass and sup estimates, following ideas
coming from the theory of continuous partial differential equations,
with the differences required by the discrete character of graphs.

We recall the following notation: for any $R\in \N$, we let
\begin{equation*}
  B_{R}(x_{0})
  =
  \{
  x\in V
  \mid
  d(x,x_{0})
  \le R
  \}
  \,.
\end{equation*}
Here $d$ is the standard combinatorial distance in $G$ so that $d$ only takes integral values.
For any $f:V\to \R$ we set for all $q\ge 1$, $U\subset V$
\begin{gather*}
  \norma{f}{\ell^{q}(U)}^{q}
  =
  \sum_{x\in U}
  \abs{f(x)}^{q}
  \dgw(x)
  \,,
  \quad
  \norma{f}{\ell^{\infty}(U)}
  =
  \sup_{x\in U} \abs{f(x)}
  \,,
  \\
  \msw(U)
  =
  \sum_{x\in U}
  \dgw(x)
  \,.
\end{gather*}
All the infinite sums in this paper are absolutely convergent.  In the
following we always assume, unless explicitly noted, that all balls
are centered at a given fixed $x_{0}\in V$ and we write
$B_{R}(x_{0})=B_{R}$. We denote generic constants depending on the
parameters of the problem by $\gamma$ (large constants), $\gamma_{0}$
(small constants). We also set for all $A\subset V$
\begin{equation*}
  \chi_{A}(x)
  =
  1
  \,,
  \quad
  x\in A
  \,;
  \qquad
  \chi_{A}(x)
  =
  0
  \,,
  \quad
  x\not\in A
  \,.
\end{equation*}

\begin{definition}
  \label{d:fk}
  We say that $G$ satisfies a global Faber-Krahn inequality for a
  given $p>1$ and function $\fkf:(0,+\infty)\to(0,+\infty)$ if for any
  $v>0$ and any finite subset $U\subset V$ with $\msw(U)=v$ we have
  \begin{equation}
    \label{eq:fk}
    \fkf(v)
    \sum_{x\in U}
    \abs{f(x)}^{p}
    \dgw(x)
    \le
    \sum_{x,y\in (U)_{1}}
    \abs{f(y)-f(x)}^{p}
    \wgt(x,y)
    \,,
  \end{equation}
  for all $f:V\to \R$ such that $f(x)=0$ if $x\not \in U$; here
  \begin{equation*}
    (U)_{1}
    =
    \{
    x\in V
    \mid
    d(x,U)
    \le 1
    \}
    \,.
  \end{equation*}
\end{definition}

We assume throughout that $\fkf\in C(0,+\infty)$ is decreasing and
that two suitable positive constants $\SpDim$, $\omega$ exists such that
\begin{align}
  \label{eq:fkf_nd}
  v&\mapsto
     \fkf(v)^{-1}
     v^{-\frac{p}{\SpDim}}
     \,,
     \quad
     v>0
     \,,
     \quad
     \text{is nondecreasing;}
  \\
  \label{eq:fkf_ni}
  v&\mapsto
     \fkf(v)^{-1}
     v^{-\omega}
     \,,
     \quad
     v>0
     \,,
     \quad
     \text{is nonincreasing.}
\end{align}
An important class of functions in our approach is given by 
\begin{equation}
  \label{eq:dcf_def}
  \dcf_{r}(s)
  =
  s^{\frac{p-2}{r}}
  \fkf(s^{-1})
  \,,
  \qquad
  s>0
  \,,
\end{equation}
for each fixed $r\ge 1$. They, or more exactly their inverses, give the correct time-space scaling for
the equation \eqref{eq:pde}, see for example Theorems \ref{t:l1} and \ref{p:bbl} below.

If we make the additional assumption that for some constant $c>0$
\begin{equation}
  \label{eq:fkf_bbl}
  \fkf(v)
  \ge
  c
  \rdf(v)^{-p}
  \,,
  \qquad
  v>0
  \,,
\end{equation}
where $\rdf:(0,+\infty)\to (0,+\infty)$ is such that
$\msw(B_{\rdf(v)})=v$, we may connect $\dcf_{1}$ to the measure of a
ball in $G$. This in turn allows us to prove sharpness of our
$\ell^{1}$--$\ell^{\infty}$ estimate. Property \eqref{eq:fkf_bbl} is
rather natural. For instance it is known to hold for the explicit examples in
Subsection~\ref{s:examples}, to which we refer for
implementations of our results in some concrete relevant cases.

\begin{remark}
  \label{r:spdim}
  The constant $\SpDim$ in \eqref{eq:fkf_nd} has no intrinsic meaning
  in this paper, and it is employed here only with the purpose of
  making easier the comparison with the case of standard regular
  graphs $\Z^{\SpDim}$, where $\fkf(v)=\gamma_{0}v^{-p/\SpDim}$, see Subsection~\ref{s:examples}.
\end{remark}

\begin{remark}
  \label{r:fk_below}
  Let $x\in V$ and choose $U=\{x\}$, $f=\chi_{U}$ in \eqref{eq:fk}, which then yields
  \begin{equation}
    \label{eq:rem_fk}
    \fkf(\dgw(x))
    \dgw(x)
    \le
    2\dgw(x)
    \,.
  \end{equation}
  Since $\fkf$ is decreasing by assumption we infer
  \begin{equation}
    \label{eq:rem_fk_bound}
    \dgw(x)\ge \fkf^{(-1)}(2)
    \,.
  \end{equation}
  A remark in this connection is perhaps in order: clearly according
  to its definition the Faber-Krahn function $\fkf(v)$ is defined for
  uniformly positive $v$ according to \eqref{eq:rem_fk_bound}, so that
  \eqref{eq:fkf_nd}, \eqref{eq:fkf_ni} should be assumed for such
  $v$. Aiming at a technically streamlined framework, we extend $\fkf$
  to all $v>0$, while easily preserving the latter
  assumptions. However one can check that for large times, $\fkf$ is
  evaluated at large arguments in our results, which are thus
  independent of this extension.
\end{remark}

\begin{remark}
  \label{r:lp_scale}
  A consequence of \eqref{eq:rem_fk_bound} is that any bound in $\ell^{q}(V)$ yields immediately a uniform pointwise bound: if $\unkii\in\ell^{q}(V)$,
  \begin{equation}
    \label{eq:lp_linf}
    \abs{\unkii(z)}^{q}
    \le
    \abs{\unkii(z)}^{q}\frac{\dgw(z)}{\fkf^{(-1)}(2)}
    \le
    \frac{1}{\fkf^{(-1)}(2)}
    \norma{\unkii}{\ell^{q}(V)}^{q}
    \,,
    \quad
    z\in V
    \,.
  \end{equation}
  In turn this implies that $\ell^{p}(V)\subset\ell^{q}(V)$ if $p<q$, since
  \begin{equation*}
    \sum_{x\in V}
    \abs{f(x)}^{q}
    \dgw(x)
    \le
    M^{q-p}
    \sum_{x\in V}
    \abs{f(x)}^{p}
    \dgw(x)
    \,,
  \end{equation*}
  for a suitable $M$ as in \eqref{eq:lp_linf}.
\end{remark}

\begin{definition}
  \label{d:sol}
  We say that $\unk\in L^{\infty}(0,T; \ell^{r}(V))$ is a solution to
  \eqref{eq:pde} if $\unk(x)\in C^{1}([0,T])$ for every
  $x\in V$ and $\unk$ satisfies \eqref{eq:pde} in the classical
  pointwise sense.
  \\
  A solution to \eqref{eq:pde}--\eqref{eq:init} also is required to
  take the initial data prescribed by \eqref{eq:init}, for each
  $x\in V$.
\end{definition}

We refer the reader to \cite{Hua:Mugnolo:2015} for existence and
uniqueness of solutions. To make this paper more self-contained
however we sketch in Section~\ref{s:prelim} a proof of existence in
Proposition~\ref{p:ex} (in $\ell^{q}$, $q>1$, see Theorem~\ref{t:l1} for $q=1$),
and of uniqueness via comparison in Proposition~\ref{p:compare}.

Our first two results are typical of the local approach we pursue. All solutions we consider below are nonnegative. 
\begin{proposition}
  \label{p:linf_meas}
  Let $\unk:V\to \R$ be a  solution to \eqref{eq:pde},
  with $\unk\in L^{\infty}(0,T;\ell^{r}(V))$ for some $r\ge 1$. Then for all $x\in V$, $0<t<T$
  \begin{equation}
    \label{eq:linf_meas_n}
    \unk(x,t)
    \le
    k
    \,,
  \end{equation}
  provided $k>0$ satisfies for a suitable $\gamma_{0}(p,\SpDim)$
  \begin{equation}
    \label{eq:linf_meas_nn}
    k^{-1}
    t^{-\frac{1}{p-2}}
    \fkf\Big(\sup_{\frac{t}{4}<\tau<t}\msw(\{x\in V\mid \unk(x,\tau)> {k}/{2}\})\Big)^{-\frac{1}{p-2}}
    \le
    \gamma_{0}
    \,.
  \end{equation}
\end{proposition}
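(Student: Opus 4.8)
The plan is to run a De Giorgi–type iteration in the time variable, tracking the $\ell^2$-mass of truncations of $\unk$ on a shrinking sequence of time intervals. Fix $x_0=x$ and set up levels $k_n = k(1-2^{-n-1})$, so $k_0=k/2$ and $k_n\uparrow k$, together with nested intervals $I_n=(s_n,t)$ where $s_n = \frac{t}{4}(2-2^{-n})$, so $s_0=t/4$ and $s_n\uparrow t/2$. Write $\unkii_n = \ppos{\unk-k_n}$ and let $A_n(\tau) = \{x\in V\mid \unk(x,\tau)>k_n\}$, the super-level set. The first step is the energy inequality: multiplying \eqref{eq:pde} by $\unkii_n\dgw$ and summing over $V$, using the summation-by-parts identity and the monotonicity of $s\mapsto\abs{s}^{p-2}s$, one obtains for a.e. $\tau$
\begin{equation*}
  \frac{1}{2}\frac{d}{d\tau}\norma{\unkii_n(\tau)}{\ell^2(V)}^2
  +
  \sum_{x,y\in V}\abs{\unkii_n(y)-\unkii_n(x)}^{p}\wgt(x,y)
  \le
  0
  \,,
\end{equation*}
since the discrete gradient of the truncation is controlled by the gradient of $\unk$ where the truncation is active. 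Integrating over $(\sigma,t)$ with $s_{n-1}<\sigma<s_n$ and then averaging $\sigma$ over $(s_{n-1},s_n)$ gives the standard bound on $\sup_{I_n}\norma{\unkii_n(\tau)}{\ell^2(V)}^2 + \int_{I_n}\sum\abs{\nabla\unkii_n}^{p}$ by $\gamma 4^{n}t^{-1}\int_{I_{n-1}}\norma{\unkii_{n-1}}{\ell^2(V)}^2\di\tau$.

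The second step converts this energy control into a decay estimate for the quantity $Y_n = \int_{I_n}\norma{\unkii_n(\tau)}{\ell^2(V)}^2\di\tau$. Here the Faber-Krahn inequality \eqref{eq:fk} replaces the Sobolev inequality of the continuous theory: applied with $U=A_n(\tau)$ (which has measure $\msw(A_n(\tau))\le \sup_{t/4<\tau<t}\msw(A_0(\tau))=:v$ since $A_n\subset A_0$, and $\fkf$ is decreasing so $\fkf(\msw(A_n(\tau)))\ge\fkf(v)$) and with $f=\unkii_n$, it yields $\fkf(v)\norma{\unkii_n(\tau)}{\ell^p(V)}^p \le \sum\abs{\nabla\unkii_n}^{p}$. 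Interpolating $\ell^2$ between $\ell^p$ and the measure of $A_n$ — using that on $A_n$ one has $\unkii_{n-1}>k_n-k_{n-1}=2^{-n-1}k$, hence $\msw(A_n(\tau))\le (2^{-n-1}k)^{-2}\norma{\unkii_{n-1}(\tau)}{\ell^2(V)}^2$ — one bounds $Y_n$ by $\gamma b^{n} k^{-\beta}t^{-\alpha}\fkf(v)^{-\alpha'} Y_{n-1}^{1+\delta}$ for explicit exponents with $\delta>0$, $b>1$. The precise exponents are forced by \eqref{eq:fkf_nd}: the term $\fkf(v)^{-1}v^{-p/\SpDim}$ being nondecreasing is exactly what makes the iteration scale-invariant and produces a clean power of $t^{-1/(p-2)}\fkf(v)^{-1/(p-2)}$.

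The third step is the routine fast-geometric-convergence lemma: if $Y_n\le C b^n Y_{n-1}^{1+\delta}$ and $Y_0$ is small enough — precisely $Y_0\le C^{-1/\delta}b^{-1/\delta^2}$ — then $Y_n\to0$, so $\unkii_\infty = \ppos{\unk-k}\equiv0$ on $(t/2,t)$ and a fortiori $\unk(x,t)\le k$. Unwinding the smallness requirement on $Y_0$ (which is itself controlled by $v\cdot(k/4)^2\cdot(t/4)$ trivially, or better by just $\msw(A_0)\cdot k^2\cdot t$) against the constants $C\sim k^{-\beta}t^{-\alpha}\fkf(v)^{-\alpha'}$ produces exactly the scalar condition \eqref{eq:linf_meas_nn}, namely $k^{-1}t^{-1/(p-2)}\fkf(v)^{-1/(p-2)}\le\gamma_0$. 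I expect the main obstacle to be the second step: in a graph the truncation $\unkii_n$ is supported on $A_n$ but its discrete gradient "leaks" onto the vertex boundary $(A_n)_1\setminus A_n$, where $\unkii_n=0$ but neighbouring values need not be, so one must be careful that the energy inequality's gradient term really does dominate $\sum_{x,y\in(A_n)_1}\abs{\unkii_n(y)-\unkii_n(x)}^p\wgt(x,y)$ as required to invoke \eqref{eq:fk} — this works because $\abs{\unkii_n(y)-\unkii_n(x)}\le\abs{\unk(y)-\unk(x)}$ pointwise, but it is the one place where the discrete structure genuinely differs from the continuous case and must be handled explicitly rather than by analogy.
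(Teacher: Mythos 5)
There are two genuine gaps, both at the heart of the argument. First, the recursion you derive is not superlinear. Once you freeze the Faber--Krahn constant at the fixed value $v=\sup_{t/4<\tau<t}\msw(A_0(\tau))$, your chain of inequalities gives
\begin{equation*}
  \Norma{\ppos{\unk(\tau)-k_n}}{\ell^{2}(V)}^{2}
  \le
  \fkf(v)^{-\frac{2}{p}}
  \Big(\sum_{x,y\in V}\Abs{\dif_{y}\ppos{\unk(x,\tau)-k_n}}^{p}\wgt(x,y)\Big)^{\frac{2}{p}}
  \msw(A_n(\tau))^{1-\frac{2}{p}}
  \,,
\end{equation*}
and after Chebychev, H\"older in $\tau$ and the energy estimate, the total exponent of $Y_{n-1}$ is $\frac{2}{p}+\bigl(1-\frac{2}{p}\bigr)=1$: you obtain $Y_n\le C b^{n}Y_{n-1}$, i.e.\ $\delta=0$, and the fast-geometric-convergence lemma does not apply. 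The superlinearity has to be manufactured, and in the paper it comes precisely from keeping the Faber--Krahn constant \emph{variable} and applying \eqref{eq:fkf_nd} to it: writing $\fkf(Y_n)^{-1}\le\fkf(Y_0)^{-1}Y_0^{-p/\SpDim}Y_n^{p/\SpDim}$ turns a linear recursion for the measures $Y_n=\sup\msw(\{\unk>h_n\})$ into $Y_{n+1}\le Cb^{n}Y_n^{1+pq/(\SpDim(p-2))}$. Freezing $\fkf$ at $v$ at the outset, as you do, destroys exactly this mechanism. (One could instead exploit the $\sup_{\tau}\ell^{2}$ part of the energy, as in the continuous parabolic embedding, but that again requires applying \eqref{eq:fkf_nd} to the variable $\msw(A_n(\tau))$ and leads to a two-step recursion of the form $Y_n\le Cb^{n}Y_{n-1}Y_{n-2}^{2/\SpDim}$ --- considerably more delicate than what you wrote.)

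Second, your claim that $Y_0$ ``is itself controlled by $v\cdot(k/4)^2\cdot(t/4)$ trivially'' is false: nothing bounds $\unk$ above by a multiple of $k$ on $\{\unk>k/2\}$ --- that is precisely the conclusion being proved --- so $\ppos{\unk-k/2}$ may be arbitrarily large on a set of small measure, and $\int\norma{\ppos{\unk-k/2}}{\ell^{2}(V)}^{2}\di\tau$ is not controlled by $k^{2}tv$. Since the hypothesis \eqref{eq:linf_meas_nn} involves only the \emph{measure} $v$ of the superlevel set, you must first prove that the $\ell^{q}$ energy of the truncation at a level $k(1-\sigma_1)$ is bounded by (a function of) the measure of the superlevel set at a slightly higher level $k(1-\sigma_2)$. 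This ``energy controlled by measure'' step is the content of the paper's entire first iteration, the reverse recursion $L_i\le\delta L_{i+1}+\gamma(\sigma_2-\sigma_1)^{-p/(p-b)}\delta^{-b/(p-b)}t^{-b/(p-b)}\fkf(\Mcut_{\infty})^{-b/(p-b)}\Mcut_{\infty}$ summed geometrically to give \eqref{eq:linf_j}; it is nontrivial and entirely missing from your proposal. Your energy inequality and your handling of the boundary leakage in \eqref{eq:fk} are correct, but without these two ingredients the iteration neither converges nor reduces to the stated scalar condition.
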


\begin{corollary}
  \label{co:linf_int}
  Under the assumptions in Proposition~\ref{p:linf_meas}, we have
  \begin{equation}
    \label{eq:linf_int_m}
    \unk(x,t)
    \le
    \gamma
    \sup_{0<\tau<t}\norma{\unk(\tau)}{\ell^{r}(V)}
    \big[\dcf_{r}^{(-1)}
    \big(
    t^{-1}
    \sup_{0<\tau<t}\norma{\unk(\tau)}{\ell^{r}(V)}^{-(p-2)}
    \big)
    \big]^{\frac{1}{r}}
    \,,
  \end{equation}
  for all $x\in V$, $0<t<T$.
  Here $\dcf_{r}^{(-1)}$ is the inverse function of $\dcf_{r}$ as defined in \eqref{eq:dcf_def}.
\end{corollary}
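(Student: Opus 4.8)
The plan is to extract from Proposition~\ref{p:linf_meas} an explicit value of $k$ for which condition \eqref{eq:linf_meas_nn} holds. Assume $\unk\not\equiv 0$ (otherwise \eqref{eq:linf_int_m} is trivial) and set $M=\sup_{0<\tau<t}\norma{\unk(\tau)}{\ell^{r}(V)}$, which we may take to be finite. The only estimate needed beyond Proposition~\ref{p:linf_meas} is Chebyshev's inequality in $\ell^{r}$: for every $\tau\in(0,t)$ and every $k>0$,
\begin{equation*}
  \msw\big(\{x\in V\mid \unk(x,\tau)>k/2\}\big)
  \le
  (2/k)^{r}\norma{\unk(\tau)}{\ell^{r}(V)}^{r}
  \le
  (2M/k)^{r}
  \,.
\end{equation*}
Since $\fkf$ is decreasing, the left-hand side of \eqref{eq:linf_meas_nn} is then bounded by $k^{-1}t^{-1/(p-2)}\fkf\big((2M/k)^{r}\big)^{-1/(p-2)}$, so it suffices to choose $k$ making this last quantity $\le\gamma_{0}$.

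The key step is to rewrite this requirement in terms of $\dcf_{r}$. Setting $s=\big(k/(2M)\big)^{r}$, so that $(2M/k)^{r}=s^{-1}$, the definition \eqref{eq:dcf_def} gives $\fkf(s^{-1})=s^{-(p-2)/r}\dcf_{r}(s)$, and a direct manipulation turns the condition into
\begin{equation*}
  \dcf_{r}(s)\ge\gamma_{1}\,t^{-1}M^{-(p-2)}\,,\qquad\gamma_{1}=(2\gamma_{0})^{-(p-2)}\,.
\end{equation*}
By \eqref{eq:fkf_nd} we may write $\dcf_{r}(s)=s^{\frac{p-2}{r}+\frac{p}{\SpDim}}h(s)$ with $h(s)=\fkf(s^{-1})s^{-p/\SpDim}$ positive and nondecreasing; since $\frac{p-2}{r}+\frac{p}{\SpDim}>0$ and $\fkf\in C(0,+\infty)$, the function $\dcf_{r}$ is a continuous increasing bijection of $(0,+\infty)$ onto itself. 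I would then take $s$ as small as allowed, namely $s=\dcf_{r}^{(-1)}\big(\gamma_{1}t^{-1}M^{-(p-2)}\big)$, and correspondingly $k=2M s^{1/r}$; Proposition~\ref{p:linf_meas} gives $\unk(x,t)\le 2M s^{1/r}$ for all $x\in V$, $0<t<T$.

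The last task, and the only delicate one, is to absorb the constant $\gamma_{1}$ into a single multiplicative constant. For $\lambda\ge1$, monotonicity of $h$ gives $\dcf_{r}(\lambda s)\ge\lambda^{\frac{p-2}{r}+\frac{p}{\SpDim}}\dcf_{r}(s)$; feeding this into $\dcf_{r}^{(-1)}$ yields $\dcf_{r}^{(-1)}(\mu x)\le\mu^{1/\alpha}\dcf_{r}^{(-1)}(x)$ for $\mu\ge1$, with $\alpha=\frac{p-2}{r}+\frac{p}{\SpDim}$ (while $\dcf_{r}^{(-1)}(\mu x)\le\dcf_{r}^{(-1)}(x)$ trivially if $\mu<1$). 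Applying this with $\mu=\gamma_{1}$ bounds $s\le\gamma_{2}\,\dcf_{r}^{(-1)}\big(t^{-1}M^{-(p-2)}\big)$ with $\gamma_{2}=\gamma_{2}(p,\SpDim,r)$, hence $k\le\gamma M\big[\dcf_{r}^{(-1)}\big(t^{-1}M^{-(p-2)}\big)\big]^{1/r}$, which is exactly \eqref{eq:linf_int_m}. The whole argument is elementary; the one point requiring care is this propagation of the constant through the nonlinear inverse $\dcf_{r}^{(-1)}$, which is precisely what the homogeneity-type hypotheses \eqref{eq:fkf_nd}--\eqref{eq:fkf_ni} are designed to control.
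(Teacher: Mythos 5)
Your proof is correct and follows essentially the same route as the paper's: Chebyshev's inequality in $\ell^{r}$ to bound the measure of the level set in \eqref{eq:linf_meas_nn}, then solving the resulting condition for $k$ via the substitution $s=(k/2M)^{r}$ and the definition of $\dcf_{r}$, and finally absorbing the constant through the sub-homogeneity of $\dcf_{r}^{(-1)}$, which you re-derive from \eqref{eq:fkf_nd} where the paper simply invokes \eqref{eq:fkf_above}--\eqref{eq:fkf_below}. No gaps.
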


\begin{remark}
  \label{r:dcf}
  One can check easily using the fact that $\fkf$ is nonincreasing that
  \begin{equation*}
    a\mapsto a\dcf_{r}^{(-1)}(sa^{-(p-2)})^{\frac{1}{r}}
  \end{equation*}
  is nondecreasing in $a>0$ for each fixed $s>0$.
\end{remark}

Next Theorem follows directly from the estimates we stated above. Note
that conservation of mass in \eqref{eq:l1_n} was proved also in
\cite{Hua:Mugnolo:2015}, while the other estimates are new, as far as
we know.
\begin{theorem}
  \label{t:l1}
  Let $\unk_{0}\in\ell^{1}(V)$, $\unk_{0}\ge 0$. Then problem
  \eqref{eq:pde}--\eqref{eq:init} has a unique solution
  satisfying for all $t>0$
  \begin{align}
    \label{eq:l1_n}
    \norma{\unk(t)}{\ell^{1}(V)}
    &=
      \norma{\unk_{0}}{\ell^{1}(V)}
      \,,
    \\
    \label{eq:l1_nn}
    \norma{\unk(t)}{\ell^{\infty}(V)}
    &\le
      \gamma
      \norma{\unk_{0}}{\ell^{1}(V)}
      \dcf_{1}^{(-1)}
      \big(
      t^{-1}
      \norma{\unk_{0}}{\ell^{1}(V)}^{-(p-2)}
      \big)
      \,.
  \end{align}
  In addition $\unk$ satisfies
  \begin{multline}
    \label{eq:l1_nnn}
    \int_{0}^{t}
    \sum_{x,y\in V}
    \abs{\dif_{y}\unk(x,\tau)}^{p-1}
    \wgt(x,y)
    \di\tau
    \\
    \le
    \gamma
    t^{\frac{1}{p}}
    \norma{\unk_{0}}{\ell^{1}(V)}^{\frac{2(p-1)}{p}}
    \dcf_{1}^{(-1)}(t^{-1}\norma{\unk_{0}}{\ell^{1}(V)}^{-(p-2)})^{\frac{p-2}{p}}
    \,.
  \end{multline}
\end{theorem}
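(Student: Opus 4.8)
The plan is to derive Theorem~\ref{t:l1} as a more or less direct consequence of the existence/uniqueness theory (Proposition~\ref{p:ex} and Proposition~\ref{p:compare}) together with Corollary~\ref{co:linf_int}, plus a standard energy identity. First I would obtain a solution in $\ell^{1}(V)$ by approximation: truncate the initial datum, $\unk_{0}^{(n)}=\unk_{0}\chi_{B_{n}}$, solve in $\ell^{q}(V)$ for some $q>1$ via Proposition~\ref{p:ex}, and pass to the limit using the comparison principle (Proposition~\ref{p:compare}) to control the sequence monotonically in $n$ and to bound differences in $\ell^{1}$. The $\ell^{1}$ contraction, which follows by testing the equation for the difference of two solutions against $\sgn$ of that difference (or a smooth approximation of it) and using antisymmetry of the discrete gradient term, gives at once uniqueness and the convergence of the approximations; in the same computation, testing against $\chi_{V}$ (legitimate since all sums converge absolutely) and using the antisymmetry $\wgt(x,y)=\wgt(y,x)$ yields $\frac{\di}{\di t}\norma{\unk(t)}{\ell^{1}(V)}=0$, i.e.\ \eqref{eq:l1_n}.

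For \eqref{eq:l1_nn}, I would simply invoke Corollary~\ref{co:linf_int} with $r=1$: since by \eqref{eq:l1_n} we have $\sup_{0<\tau<t}\norma{\unk(\tau)}{\ell^{1}(V)}=\norma{\unk_{0}}{\ell^{1}(V)}$, substituting this into \eqref{eq:linf_int_m} gives exactly the claimed bound (the monotonicity noted in Remark~\ref{r:dcf} guarantees that replacing the running sup by its constant value is harmless). Strictly one must check that the corollary, proved for $r\ge1$, is applicable at $r=1$ for the solution constructed above; this is where the separate $\ell^{1}$ existence statement is needed, but the estimate itself is purely a consequence of Proposition~\ref{p:linf_meas}, which does not distinguish $r=1$.

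For the gradient bound \eqref{eq:l1_nnn}, the natural route is an energy estimate. Testing \eqref{eq:pde} against $\unk$ and summing (again using antisymmetry to symmetrize the double sum) gives, for a.e.\ $\tau$,
\begin{equation*}
  \frac12\frac{\di}{\di\tau}\norma{\unk(\tau)}{\ell^{2}(V)}^{2}
  +
  \sum_{x,y\in V}\abs{\dif_{y}\unk(x,\tau)}^{p}\wgt(x,y)
  =
  0
  \,,
\end{equation*}
so that after integration $\int_{0}^{t}\sum_{x,y}\abs{\dif_{y}\unk}^{p}\wgt\di\tau\le\frac12\norma{\unk_{0}}{\ell^{2}(V)}^{2}$; here $\norma{\unk_{0}}{\ell^{2}(V)}^{2}\le\norma{\unk_{0}}{\ell^{\infty}(V)}\norma{\unk_{0}}{\ell^{1}(V)}$ and, via \eqref{eq:l1_nn} applied at a small time or directly via Remark~\ref{r:lp_scale}, this is controlled by $\norma{\unk_{0}}{\ell^{1}(V)}$-dependent quantities. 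Then I would pass from the $\ell^{p}$ space-time norm of the gradient to the $\ell^{p-1}$ norm in \eqref{eq:l1_nnn} by H\"older's inequality in the variable $(x,y,\tau)$ with respect to the measure $\wgt(x,y)\di\tau$: writing $\abs{\dif_{y}\unk}^{p-1}=\abs{\dif_{y}\unk}^{p-1}\cdot1$ and applying H\"older with exponents $p/(p-1)$ and $p$ produces a factor $\big(\int_{0}^{t}\sum_{x,y}\abs{\dif_{y}\unk}^{p}\wgt\di\tau\big)^{(p-1)/p}$ times $\big(\int_{0}^{t}\msw(\{\unk>0\})\di\tau\big)^{1/p}$, and the support term is then estimated by the sup bound and mass conservation; tracking the exponents carefully reproduces the powers $t^{1/p}$, $\norma{\unk_{0}}{\ell^{1}(V)}^{2(p-1)/p}$, and $\dcf_{1}^{(-1)}(\cdots)^{(p-2)/p}$ in \eqref{eq:l1_nnn}.

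The main obstacle is not conceptual but bookkeeping: rigorously justifying the energy identity and the $\ell^{1}$ testing for merely-$L^{\infty}(0,T;\ell^{r})$ solutions requires care (one tests against truncations $\unk\chi_{B_{n}}$ and lets $n\to\infty$, using absolute convergence of all the sums and the locally-finite-degree hypothesis to control the boundary terms), and matching the precise scaling exponents in \eqref{eq:l1_nnn} demands that the H\"older split and the substitution of \eqref{eq:l1_nn} be arranged so that every power of $t$ and of $\norma{\unk_{0}}{\ell^{1}(V)}$ lands exactly where claimed; this is the step I expect to consume most of the work, and where the homogeneity properties \eqref{eq:fkf_nd}--\eqref{eq:fkf_ni} of $\fkf$ (hence of $\dcf_{1}$) will be used to simplify the resulting expression into the stated closed form.
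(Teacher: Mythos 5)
Your treatment of existence, uniqueness, mass conservation \eqref{eq:l1_n} and the sup bound \eqref{eq:l1_nn} is essentially the paper's: the $\ell^{1}$ identity is obtained by testing against (truncated) constants and using the antisymmetry of the flux, and \eqref{eq:l1_nn} is exactly Corollary~\ref{co:linf_int} with $r=1$ combined with \eqref{eq:l1_n} and Remark~\ref{r:dcf}. That part is fine.

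The gradient estimate \eqref{eq:l1_nnn} is where your argument breaks down, in two distinct ways. First, your H\"older split produces the factor $\big(\int_{0}^{t}\msw(\{\unk>0\})\di\tau\big)^{1/p}$, and there is no upper bound available for $\msw(\{\unk(\tau)>0\})$: the paper stresses that finite speed of propagation \emph{fails} on graphs (\cite{Hua:Mugnolo:2015}), so even for finitely supported $\unk_{0}$ the support of $\unk(\tau)$ is all of $V$ for $\tau>0$, and Chebyshev only controls $\msw(\{\unk>\lambda\})$ for $\lambda>0$. Second, and more structurally, even if that factor were finite the exponents could not come out right: your first factor $\big(\int_{0}^{t}\sum\abs{\dif_{y}\unk}^{p}\wgt\di\tau\big)^{(p-1)/p}\le\gamma\norma{\unk_{0}}{\ell^{2}(V)}^{2(p-1)/p}$ is independent of $t$, so all of the time behaviour $t^{1/p}\dcf_{1}^{(-1)}(t^{-1}\norma{\unk_{0}}{\ell^{1}(V)}^{-(p-2)})^{(p-2)/p}$ would have to be supplied by the support factor; in the model case $G=\Z^{\SpDim}$ this would force $\msw(\{\unk(\tau)>0\})\lesssim\dcf_{1}^{(-1)}(\tau^{-1}\cdots)^{p-2}\to 0$ as $\tau\to\infty$, which is absurd. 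What you obtain this way is at best the non-sharp bound that Remark~\ref{r:lp_est} explicitly warns against (and which is useless for Theorem~\ref{p:bbl}). The paper's actual device is a \emph{weighted} split: write $\abs{\dif_{y}\unk}^{p-1}$ as the product of $\big[\tau^{-\delta(p-1)}(\unk(x)+\unk(y))^{(2-\theta)(p-1)}\big]^{1/p}$ and $\big[\tau^{\delta}\abs{\dif_{y}\unk}^{p}(\unk(x)+\unk(y))^{\theta-2}\big]^{(p-1)/p}$ with $\theta=(2p-3)/(p-1)$, so that $(2-\theta)(p-1)=1$ and the first H\"older factor is controlled by mass conservation alone (no support measure appears); the second factor is controlled by testing the equation against $\tau^{\delta}\unk^{\theta-1}$, which yields $\int_{0}^{t}\tau^{\delta-1}\norma{\unk(\tau)}{\ell^{\infty}(V)}^{(p-2)/(p-1)}\norma{\unk_{0}}{\ell^{1}(V)}\di\tau$ and hence, via \eqref{eq:l1_nn} and the monotonicity of Lemma~\ref{l:dcf} (which is what makes the singular weight $\tau^{\delta-1}$ integrable for $\nu<\delta<1/(p-1)$), exactly the stated right-hand side. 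Without this weighted energy estimate the time decay of the gradient is lost and the sharp form of \eqref{eq:l1_nnn} is out of reach.
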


\begin{remark}
  \label{r:lp_est}
  We notice that one could exploit \eqref{eq:l1_n}, \eqref{eq:l1_nn}
  to derive trivially a bound of the integral in
  \eqref{eq:l1_nnn}. This is due of course to the fact that the
  $p$-laplacian in our setting is discrete, and it would not be
  possible in the framework of continuous partial differential
  equations. 

  Such a bound however is not sharp, and for example could not be
  used in the proof of Theorem~\ref{p:bbl}.

  In other instances where optimality is not needed we exploit a
  device similar to the one just described, relying on
  Remark~\ref{r:lp_scale}; see for example the proof of
  Lemma~\ref{l:cacc2}.
\end{remark}

So far our extension to graphs of methods and results of
continuous differential equations has been successful. However, in the
latter setting a standard device to prove optimality of the bound in
\eqref{eq:l1_nn} relies on the property of finite speed of propagation
(i.e., solutions with initially bounded support keep this feature for
all $t>0$). In the setting of graphs this property strikingly fails,
as shown in \cite{Hua:Mugnolo:2015}. As a technical but perhaps
worthwile side remark, we note that all the main ingredients in the
proof of finite speed of propagation (see \cite{Andreucci:Tedeev:1999},
\cite{Andreucci:Tedeev:2000}) seem to be available in graphs too:
embeddings as in \cite{Ostrovskii:2005}, Caccioppoli inequalities as in
Lemma~\ref{l:cacc} below, and of course iterative techniques as the
one displayed in the proof of Proposition~\ref{p:linf_meas}. The key
exception in this regard is the fact that full localization via an
infinite sequence of nested shrinking balls is clearly prohibited by
the discrete metric at hand. This is a point of marked difference with
the continuous setting.

Still we can prove sharpness of our $\ell^{1}$--$\ell^{\infty}$ bound
\eqref{eq:l1_nn} by means of the following result of confinement of
mass. By the same argument we can estimate also
a suitable moment of the solution, which is also a new result for
nonlinear diffusion in graphs, see Section~\ref{s:bbl}.
\begin{theorem}
  \label{p:bbl}
  Let $\unk_{0}\ge 0$ be finitely
  supported. Then for every $1>\eps >0$ there exists a $\varGamma>0$
  such that
  \begin{equation}
    \label{eq:bbl_n}
    \norma{\unk(t)}{\ell^{1}(B_{R})}
    \ge
    (1-\eps)
    \norma{\unk_{0}}{\ell^{1}(V)}
    \,,
    \qquad
    t>0
    \,,
  \end{equation}
  provided $B_{\floor{R/2}}$ contains the support of $\unk_{0}$, and $R$ is chosen so that
  \begin{equation}
    \label{eq:bbl_nn}
    R
    \ge
    \varGamma
    t^{\frac{1}{p}}
    \norma{\unk_{0}}{\ell^{1}(V)}^{\frac{p-2}{p}}
    \dcf_{1}^{(-1)}(t^{-1}\norma{\unk_{0}}{\ell^{1}(V)}^{-(p-2)})^{\frac{p-2}{p}}
    \ge 8
    \,.
  \end{equation}
  In addition, provided $R$ is chosen as in \eqref{eq:bbl_nn}, for $\eps=1/2$, and $\alpha\in (0,1)$,
  \begin{equation}
    \label{eq:bbl_p}
    \sum_{x\in V}
    d(x,x_{0})^{\alpha}
    \unk(x,t)
    \dgw(x)
    \le
    \gamma
    R^{\alpha}
    \norma{\unk_{0}}{\ell^{1}(V)}
    \,,
    \qquad
    t>0
    \,.
  \end{equation}
\end{theorem}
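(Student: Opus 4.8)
The plan is to prove the confinement estimate \eqref{eq:bbl_n} by a weighted energy (Caccioppoli-type) argument applied to the solution multiplied by a cutoff supported outside a large ball, combined with the a priori bounds already available from Theorem~\ref{t:l1}. First I would fix a cutoff function $\eta:V\to[0,1]$ that vanishes on $B_{\floor{R/2}}$ (which contains the support of $\unk_{0}$) and equals $1$ outside $B_{R}$, with $\abs{\dif_{y}\eta}\lesssim 1/R$ in the discrete sense (e.g.\ $\eta(x) = \min\{1, \max\{0, (d(x,x_{0})-\floor{R/2})/(R-\floor{R/2})\}\}$). Multiplying \eqref{eq:pde} by $\eta(x)^{q}$ (or by $\eta(x)^{q}\unk(x,t)^{q-1}$ for a suitable power, depending on which norm one controls) and summing over $V$, I would get, using the discrete summation-by-parts identity, an evolution inequality for $\sum_{x} \unk(x,t)\eta(x)^{q}\dgw(x)$ whose right-hand side is controlled by the gradient integral of $\unk$ times $1/R$ times the measure of the transition region. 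Integrating in time from $0$ to $t$ and using that the cutoff kills the initial data, the mass outside $B_{R}$ at time $t$ is bounded by $C R^{-1}\int_{0}^{t}\sum_{x,y}\abs{\dif_{y}\unk}^{p-1}\wgt(x,y)\di\tau$ up to lower-order terms, and here \eqref{eq:l1_nnn} gives exactly the quantity that, divided by $R$, is $\lesssim \norma{\unk_0}{\ell^1}$ precisely when $R$ satisfies the lower bound in \eqref{eq:bbl_nn}. Choosing $\varGamma$ large then forces the exterior mass below $\eps\norma{\unk_{0}}{\ell^1(V)}$, which combined with conservation of mass \eqref{eq:l1_n} yields \eqref{eq:bbl_n}.

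The main technical obstacle I expect is handling the discrete gradient of the product $\unk\eta^{q}$ as opposed to $\unk$ alone: unlike the continuous case one cannot differentiate a product cleanly, so the summation by parts produces cross terms of the form $\abs{\dif_{y}\unk}^{p-2}\dif_{y}\unk\cdot(\eta(y)^{q}-\eta(x)^{q})$ that must be absorbed. The standard remedy is the elementary inequality $\abs{\eta(y)^{q}-\eta(x)^{q}}\le q(\eta(x)^{q-1}+\eta(y)^{q-1})\abs{\eta(y)-\eta(x)}$, Young's inequality to split the $(p-1)$-power, and the bound $\abs{\dif_{y}\eta}\lesssim 1/R$ on the transition shell, which has measure comparable (via \eqref{eq:fkf_bbl} and $\rdf$) to $\msw(B_R)$. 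One must be careful that the shell $B_{R}\setminus B_{\floor{R/2}}$ can carry appreciable measure, so rather than bounding it crudely one keeps the gradient integral \eqref{eq:l1_nnn} intact and only pays the factor $1/R$; this is why the argument is sharp and why the weaker gradient bound mentioned in Remark~\ref{r:lp_est} would not suffice.

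For the moment estimate \eqref{eq:bbl_p} the plan is a dyadic decomposition: apply \eqref{eq:bbl_n} with $\eps=1/2$ to a geometric sequence of radii $R_{j}=2^{j}R$, $j\ge 0$, each of which still satisfies \eqref{eq:bbl_nn} (the right-hand side only increases as we enlarge $R$, so the constraint is preserved), obtaining $\norma{\unk(t)}{\ell^{1}(V\setminus B_{R_{j}})}\le \tfrac12\norma{\unk(t)}{\ell^1(B_{R_{j}})}\le \tfrac{1}{2}\norma{\unk_0}{\ell^1(V)}$; iterating, one in fact gets exponential decay $\norma{\unk(t)}{\ell^1(V\setminus B_{R_j})}\le 2^{-j}\norma{\unk_0}{\ell^1(V)}$ or similar. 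Then
\[
  \sum_{x\in V} d(x,x_{0})^{\alpha}\unk(x,t)\dgw(x)
  =
  \sum_{x\in B_{R}} + \sum_{j\ge 0}\sum_{R_{j}<d(x,x_{0})\le R_{j+1}}
  \le
  R^{\alpha}\norma{\unk_0}{\ell^1(V)} + \sum_{j\ge 0} R_{j+1}^{\alpha}\, 2^{-j}\norma{\unk_0}{\ell^1(V)},
\]
and since $R_{j+1}^{\alpha}2^{-j}=2^{\alpha}R^{\alpha}2^{(\alpha-1)j}$ with $\alpha<1$, the geometric series converges and is bounded by $\gamma R^{\alpha}\norma{\unk_0}{\ell^1(V)}$, which is \eqref{eq:bbl_p}. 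The only subtlety here is making sure each $R_j$ is admissible in \eqref{eq:bbl_nn} — which, as noted, it is, because the lower bound is monotone in $R$ — and that $B_{\floor{R_j/2}}$ still contains the support of $\unk_0$, which holds a fortiori.
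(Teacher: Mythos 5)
Your argument for \eqref{eq:bbl_n} is correct and is essentially the paper's: test the equation with a cutoff vanishing on the support of $\unk_{0}$ and equal to $1$ far away, sum by parts, pay a factor $1/R$ on the transition region, and bound the resulting space--time gradient integral by the entropy estimate \eqref{eq:l1_nnn}; the choice \eqref{eq:bbl_nn} then makes the exterior mass at most $\gamma\varGamma^{-1}\norma{\unk_{0}}{\ell^{1}(V)}$, and conservation of mass finishes. (The paper additionally inserts a second cutoff $\zeta_{\rho,2\rho}$ and lets $\rho\to\infty$ to justify the summation by parts with a test function that is not finitely supported; you should do the same, but this is routine.)

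The moment estimate \eqref{eq:bbl_p} as you argue it has a genuine gap. Applying \eqref{eq:bbl_n} with the fixed value $\eps=1/2$ at each radius $R_{j}=2^{j}R$ gives only the uniform bound $\norma{\unk(t)}{\ell^{1}(V\setminus B_{R_{j}})}\le\tfrac12\norma{\unk_{0}}{\ell^{1}(V)}$ for every $j$; there is no iteration hidden in \eqref{eq:bbl_n} that upgrades this to $2^{-j}\norma{\unk_{0}}{\ell^{1}(V)}$, because the statement controls the exterior mass by a fixed fraction of the \emph{total} initial mass, not of the mass in the preceding annulus. With only the factor $\tfrac12$, your dyadic sum is $\sum_{j}R_{j+1}^{\alpha}\cdot\tfrac12\norma{\unk_{0}}{\ell^{1}(V)}$, which diverges. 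The repair is already contained in your proof of the first part: the cutoff computation gives the quantitative bound
\[
\sum_{x\notin B_{\rho}}\unk(x,t)\dgw(x)
\le
\gamma\,\rho^{-1}
\int_{0}^{t}\sum_{x,y\in V}\abs{\dif_{y}\unk(x,\tau)}^{p-1}\wgt(x,y)\di\tau
\le
\gamma\,\frac{R}{\varGamma\rho}\,\norma{\unk_{0}}{\ell^{1}(V)}
\]
for every admissible $\rho\ge R$, i.e.\ decay like $\rho^{-1}$, and then $\sum_{j}R_{j+1}^{\alpha}R_{j}^{-1}\simeq R^{\alpha-1}\sum_{j}2^{j(\alpha-1)}$ converges precisely because $\alpha<1$. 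The paper avoids the dyadic decomposition altogether by testing directly with $d(x,x_{0})^{\alpha}$ times the cutoff and using $\abs{\dif_{y}\,d(x,x_{0})^{\alpha}}\le\gamma R^{\alpha-1}$ for $x\notin B_{R}$, which yields $\sum_{x\notin B_{2R}}d(x,x_{0})^{\alpha}\unk(x,t)\dgw(x)\le\gamma R^{\alpha}\norma{\unk_{0}}{\ell^{1}(V)}$ in one stroke; either route works once the $\rho^{-1}$ decay, rather than the fixed fraction from \eqref{eq:bbl_n}, is what is summed.
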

Next we exploit the estimate \eqref{eq:bbl_n}--\eqref{eq:bbl_nn} in order to show
that up to a change in the constant we can reverse the inequality in \eqref{eq:l1_nn},
proving at once the optimality of both results.

\begin{corollary}
  \label{p:bbl2}
  Under the assumptions in Theorem~\ref{p:bbl}, let in
  addition $\fkf$ satisfy \eqref{eq:fkf_bbl}. Then
  \begin{equation}
    \label{eq:bbl_nnn}
    \norma{\unk(t)}{\ell^{\infty}(V)}
    \ge
    \frac{\norma{\unk_{0}}{\ell^{1}(V)}}{2\msw(B_{R})}
    \ge
    \gamma_{0}
    \norma{\unk_{0}}{\ell^{1}(V)}
    \dcf_{1}^{(-1)}
    \big(
    t^{-1}
    \norma{\unk_{0}}{\ell^{1}(V)}^{-(p-2)}
    \big)
    \,,
  \end{equation}
  where $R$ is as in \eqref{eq:bbl_nn}, for $\eps=1/2$.
\end{corollary}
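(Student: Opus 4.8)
The plan is to read off the first inequality in \eqref{eq:bbl_nnn} directly from the confinement estimate \eqref{eq:bbl_n}, and to convert the second inequality into an upper bound for $\msw(B_{R})$ that follows from \eqref{eq:fkf_bbl} together with the monotonicity assumption \eqref{eq:fkf_nd}. Throughout I would write $M=\norma{\unk_{0}}{\ell^{1}(V)}$ and $\lambda=\dcf_{1}^{(-1)}(t^{-1}M^{-(p-2)})$, so that by \eqref{eq:dcf_def} $\lambda^{p-2}\fkf(\lambda^{-1})=t^{-1}M^{-(p-2)}$; and I would fix $R$ to be the least value satisfying \eqref{eq:bbl_nn} (for large $t$ this also fulfils the support requirement of Theorem~\ref{p:bbl}, since $\operatorname{supp}\unk_{0}$ is fixed while the right-hand side of \eqref{eq:bbl_nn} diverges).

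\emph{First inequality.} Apply \eqref{eq:bbl_n} with $\eps=1/2$ and then estimate $\unk(x,t)$ pointwise by its supremum:
\[
  \tfrac12 M
  \le
  \norma{\unk(t)}{\ell^{1}(B_{R})}
  =
  \sum_{x\in B_{R}}\unk(x,t)\dgw(x)
  \le
  \norma{\unk(t)}{\ell^{\infty}(V)}\,\msw(B_{R}),
\]
which is the first inequality in \eqref{eq:bbl_nnn}.

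\emph{Second inequality.} The key computation is that the middle member of \eqref{eq:bbl_nn} equals $\varGamma\fkf(\lambda^{-1})^{-1/p}$: its $p$-th power is $\varGamma^{p}tM^{p-2}\lambda^{p-2}=\varGamma^{p}\lambda^{p-2}\big(\lambda^{p-2}\fkf(\lambda^{-1})\big)^{-1}=\varGamma^{p}\fkf(\lambda^{-1})^{-1}$. Hence the minimal admissible $R$ satisfies $R\le 2\varGamma\fkf(\lambda^{-1})^{-1/p}$ (this middle member is $\ge 8$, so rounding up to an integer costs at most a factor $2$). Next I would use \eqref{eq:fkf_bbl} at $v=\gamma\lambda^{-1}$ for a constant $\gamma\ge1$ to be chosen, obtaining $\rdf(\gamma\lambda^{-1})\ge c^{1/p}\fkf(\gamma\lambda^{-1})^{-1/p}$, while \eqref{eq:fkf_nd} with $\gamma\ge1$ gives $\fkf(\gamma\lambda^{-1})^{-1}\ge\gamma^{p/\SpDim}\fkf(\lambda^{-1})^{-1}$. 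Combining these,
\[
  \rdf(\gamma\lambda^{-1})
  \ge
  c^{1/p}\gamma^{1/\SpDim}\fkf(\lambda^{-1})^{-1/p}
  \ge
  R
  \qquad\text{once } \gamma\ge(2\varGamma c^{-1/p})^{\SpDim}.
\]
Fixing $\gamma$ to be this value (or $1$, whichever is larger) and using that $R\mapsto\msw(B_{R})$ is nondecreasing with $\msw(B_{\rdf(v)})=v$, we get $\msw(B_{R})\le\gamma\lambda^{-1}$. Substituting into the first inequality,
\[
  \norma{\unk(t)}{\ell^{\infty}(V)}
  \ge
  \frac{M}{2\msw(B_{R})}
  \ge
  \frac{M\lambda}{2\gamma}
  =
  \gamma_{0}\,\norma{\unk_{0}}{\ell^{1}(V)}\,\dcf_{1}^{(-1)}\big(t^{-1}\norma{\unk_{0}}{\ell^{1}(V)}^{-(p-2)}\big),
\]
with $\gamma_{0}=(2\gamma)^{-1}$, which is the assertion.

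The substantive ingredient, confinement of mass, is Theorem~\ref{p:bbl}, which is already available, so there is no genuinely hard analytic step here. The one point that requires care is the constant bookkeeping in the second part: absorbing the large constant $\varGamma$ coming from Theorem~\ref{p:bbl} into the argument of $\rdf$ at the price of a power $\SpDim$, which is precisely what makes hypothesis \eqref{eq:fkf_bbl} usable. One should also check that the rounding of $R$ and the support hypothesis of Theorem~\ref{p:bbl} are harmless in the regime where \eqref{eq:bbl_nn} is solvable (in particular for all large $t$).
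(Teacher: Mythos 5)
Your proof is correct and follows essentially the same route as the paper: the first inequality comes from the mass-confinement estimate of Theorem~\ref{p:bbl}, and the second from bounding $\msw(B_{R})$ above by $\gamma\,\dcf_{1}^{(-1)}(t^{-1}\norma{\unk_{0}}{\ell^{1}(V)}^{-(p-2)})^{-1}$ via \eqref{eq:fkf_bbl} together with the monotonicity \eqref{eq:fkf_nd}. The only difference is presentational: the paper isolates your second step as Lemma~\ref{l:bbl} (using \eqref{eq:fkf_below}, which is equivalent to your direct use of \eqref{eq:fkf_nd}), while you carry out the same computation, including the absorption of $\varGamma$ at the cost of the exponent $\SpDim$, inline.
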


Clearly, owing to the comparison principle of
Proposition~\ref{p:compare}, results like those in \eqref{eq:bbl_n}
and \eqref{eq:bbl_nnn} may be proved even dropping the assumption
that $\unk_{0}$ is finitely supported; for the sake of brevity we omit
the details.

In order to state our last result we need to introduce the following
function, which essentially gives the correct scaling between time and
space in the case of slow decay initial data: for
$\unk_{0}\in\ell^{q}(V)\setminus\ell^{1}(V)$ for some $q>1$ set
\begin{equation}
  \label{eq:decay_fn}
  \slt_{\unk_{0}}(R,x_{0})
  =
  \Big[
  \frac{\norma{\unk_{0}}{\ell^{1}(B_{R}(x_{0}))}}{\norma{\unk_{0}}{\ell^{q}(V\setminus B_{R}(x_{0}))}^{q}}
  \Big]^{\frac{p-2}{q-1}}
  \,
  \fkf\bigg(
  \Big(
  \frac{
    \norma{\unk_{0}}{\ell^{1}(B_{R}(x_{0}))}
  }{
    \norma{\unk_{0}}{\ell^{q}(V\setminus B_{R}(x_{0}))}
  }
  \Big)^{\frac{q}{q-1}}
  \bigg)^{-1}
  \,,
\end{equation}
for $R\in \N$, $x_{0}\in V$. Clearly for each fixed $x_{0}$ the
function $\slt_{\unk_{0}}$ is nondecreasing in $R$ and $\slt_{\unk_{0}}(R,x_{0})\to +\infty$ as
$R\to\infty$. Conversely, $\slt_{\unk_{0}}(0,x_{0})$ may be positive. However it can
be easily seen that for any given $\eps>0$ there exists $x_{0}$ such
that $\slt_{\unk_{0}}(0,x_{0})<\eps$.

\begin{theorem}
  \label{t:decay}
  Let $\unk_{0}\in\ell^{q}(V)\setminus\ell^{1}(V)$ for some $q>1$. Then for all $t>0$, $x_{0}\in V$
  \begin{equation}
    \label{eq:decay_n}
    \norma{\unk(t)}{\ell^{\infty}(V)}
    \le
    \gamma
    \norma{\unk_{0}}{\ell^{1}(B_{R}(x_{0}))}
    \dcf_{1}^{(-1)}\big(t^{-1} \norma{\unk_{0}}{\ell^{1}(B_{R}(x_{0}))}^{-(p-2)}\big)
    \,,
  \end{equation}
  provided $R$ is chosen so that
  \begin{equation}
    \label{eq:decay_nn}
    t\le
    \slt_{\unk_{0}}(R,x_{0})
    \,,
  \end{equation}
  the optimal choice being of course the minimum $R=R(t)$ such that
  \eqref{eq:decay_nn} holds true.
\end{theorem}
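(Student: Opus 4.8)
The plan is to deduce \eqref{eq:decay_n} from Proposition~\ref{p:linf_meas}. Fix $x_{0}$ and $R$ as in \eqref{eq:decay_nn}, and abbreviate $m=\norma{\unk_{0}}{\ell^{1}(B_{R}(x_{0}))}$, $n=\norma{\unk_{0}}{\ell^{q}(V\setminus B_{R}(x_{0}))}$ and $\theta(\tau)=\dcf_{1}^{(-1)}\big(\tau^{-1}m^{-(p-2)}\big)$ for $\tau>0$; by \eqref{eq:dcf_def} this amounts to $\tau\,\big(m\,\theta(\tau)\big)^{p-2}\fkf\big(\theta(\tau)^{-1}\big)=1$, and $\theta$ is nonincreasing. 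The right hand side of \eqref{eq:decay_n} is $k:=\gamma\,m\,\theta(t)$, and by Proposition~\ref{p:linf_meas} it suffices to prove that for $t/4<\tau<t$
\begin{equation*}
  \msw\big(\{x\in V\mid \unk(x,\tau)>k/2\}\big)\le \gamma\,\theta(t)^{-1}\,.
\end{equation*}
Indeed, once this is known, \eqref{eq:linf_meas_nn} follows for this $k$ upon using the scaling relation for $\theta$ and the monotonicity assumptions \eqref{eq:fkf_nd}, \eqref{eq:fkf_ni} to move a fixed multiplicative constant in and out of the argument of $\fkf$, provided the constant $\gamma$ in the definition of $k$ is chosen large. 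By Chebyshev's inequality, $\msw(\{\unk(\cdot,\tau)>k/2\})\le(2/k)^{q}\norma{\unk(\tau)}{\ell^{q}(V)}^{q}$, so everything is reduced to the global a priori bound
\begin{equation*}
  \norma{\unk(\tau)}{\ell^{q}(V)}\le \gamma\,m\,\theta(\tau)^{\frac{q-1}{q}}\,,\qquad 0<\tau\le t\,.
\end{equation*}

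To establish this $\ell^{q}$ bound I would compare $\unk$ with the solution $\unkii$ of \eqref{eq:pde} with initial datum $\unk_{0}\chi_{B_{R}(x_{0})}$, which is nonnegative and finitely supported. By Proposition~\ref{p:compare}, $\unk\ge\unkii\ge0$, so $\unkiii:=\unk-\unkii\ge0$; subtracting the two equations and writing $\abs{a}^{p-2}a-\abs{b}^{p-2}b=c(a,b)(a-b)$ with $c(a,b)\ge0$ and invariant under $(a,b)\mapsto(-a,-b)$, one finds that $\unkiii$ solves the \emph{linear} equation
\begin{equation*}
  \partial_{\tau}\unkiii(x,\tau)=\frac{1}{\dgw(x)}\sum_{y\in V}c_{xy}(\tau)\,\big(\unkiii(y,\tau)-\unkiii(x,\tau)\big)\,\wgt(x,y)\,,\qquad c_{xy}=c_{yx}\ge0\,.
\end{equation*}
Testing this equation with $\unkiii^{q-1}$ and exploiting the antisymmetry of the resulting bilinear form together with the monotonicity of $s\mapsto s^{q-1}$ on $[0,\infty)$ shows that $\tau\mapsto\norma{\unkiii(\tau)}{\ell^{q}(V)}$ is nonincreasing, hence $\norma{\unkiii(\tau)}{\ell^{q}(V)}\le\norma{\unkiii(0)}{\ell^{q}(V)}=n$. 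On the other hand $\unkii$ has $\ell^{1}$ datum of mass $m$, so Theorem~\ref{t:l1} gives $\norma{\unkii(\tau)}{\ell^{1}(V)}=m$ and $\norma{\unkii(\tau)}{\ell^{\infty}(V)}\le\gamma\,m\,\theta(\tau)$, whence $\norma{\unkii(\tau)}{\ell^{q}(V)}^{q}\le\norma{\unkii(\tau)}{\ell^{\infty}(V)}^{q-1}\norma{\unkii(\tau)}{\ell^{1}(V)}\le\gamma\,m^{q}\theta(\tau)^{q-1}$. Summing the two contributions, $\norma{\unk(\tau)}{\ell^{q}(V)}\le\gamma\,m\,\theta(\tau)^{(q-1)/q}+n$.

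It remains to absorb $n$ into the first term, and this is precisely where the hypothesis $t\le\slt_{\unk_{0}}(R,x_{0})$ enters. A direct computation from \eqref{eq:decay_fn} and \eqref{eq:dcf_def} gives the identity $\theta(\slt_{\unk_{0}}(R,x_{0}))=(n/m)^{q/(q-1)}$, i.e.\ $n=m\,\theta(\slt_{\unk_{0}}(R,x_{0}))^{(q-1)/q}$; since $\theta$ is nonincreasing and $\tau\le t\le\slt_{\unk_{0}}(R,x_{0})$ this yields $n\le m\,\theta(\tau)^{(q-1)/q}$, and therefore the $\ell^{q}$ bound above. Inserting it into the Chebyshev estimate, for $t/4<\tau<t$ we obtain $\msw(\{\unk(\cdot,\tau)>k/2\})\le\gamma\,k^{-q}m^{q}\theta(\tau)^{q-1}$; assumptions \eqref{eq:fkf_nd}, \eqref{eq:fkf_ni} force $\dcf_{1}^{(-1)}$ to change only by a bounded factor over bounded ratios of its argument, so $\theta(\tau)\le\gamma\,\theta(t)$ for $\tau>t/4$, and recalling $k=\gamma\,m\,\theta(t)$ the right hand side is bounded by $\gamma\,\theta(t)^{-1}$. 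This is the inequality needed to invoke Proposition~\ref{p:linf_meas}, and choosing the constant $\gamma$ in \eqref{eq:decay_n} large (once more through \eqref{eq:fkf_nd}, \eqref{eq:fkf_ni} and the scaling relation for $\theta$) makes \eqref{eq:linf_meas_nn} hold.

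The step I expect to be most delicate is the analysis of the difference $\unkiii=\unk-\unkii$: one must verify that it indeed solves the displayed linear equation and that the $\ell^{q}$ energy identity for it is legitimate — this relies on the comparison principle of Proposition~\ref{p:compare}, on the $C^{1}$ regularity in time built into Definition~\ref{d:sol}, and on the absolute convergence of all the sums — as well as on the bookkeeping of the scaling identities, in particular the verification of $\theta(\slt_{\unk_{0}}(R,x_{0}))=(n/m)^{q/(q-1)}$ and the matching of constants against the threshold in \eqref{eq:linf_meas_nn}. By contrast with the situation of finite speed of propagation discussed in the Introduction, no localization by an infinite family of nested balls is needed here: the argument uses only the \emph{global} $\ell^{q}$ bound above, so the combinatorial nature of the metric does not obstruct it.
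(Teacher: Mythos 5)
Your proof is correct, and it rests on the same key decomposition as the paper's: $\unk=\unkii+\unkiii$ with $\unkii$ the solution emanating from $\unk_{0}\chi_{B_{R}(x_{0})}$ and $\unkiii\ge0$ by Proposition~\ref{p:compare}; moreover the identity $\theta(\slt_{\unk_{0}}(R,x_{0}))=(n/m)^{q/(q-1)}$ by which you decode \eqref{eq:decay_nn} does check out against \eqref{eq:decay_fn} and \eqref{eq:dcf_def}. Where you genuinely diverge is in how the two pieces are recombined. The paper estimates each piece in $\ell^{\infty}$ separately --- \eqref{eq:l1_nn} for $\unkii$, and Corollary~\ref{co:linf_int} with $r=q$ applied to the difference $\unkiii$, which is legitimate by Remark~\ref{r:cacc} since the sup estimate only uses the Caccioppoli inequality, satisfied by differences of solutions (Remark~\ref{r:prelim_diff}) --- and then chooses $R$ so that the first bound dominates the second; the computation showing that this domination is equivalent to \eqref{eq:decay_nn} is the content of \eqref{eq:decay_k}--\eqref{eq:decay_kkk}. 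You instead assemble a single global bound $\norma{\unk(\tau)}{\ell^{q}(V)}\le\gamma\,m\,\theta(\tau)^{(q-1)/q}$ (from the $\ell^{q}$-monotonicity of $\unkiii$, the $\ell^{1}$--$\ell^{\infty}$ interpolation for $\unkii$, and \eqref{eq:decay_nn} to absorb the tail $n$) and feed it once through Chebyshev into Proposition~\ref{p:linf_meas} at the level $k=\gamma m\theta(t)$. Your route buys something real: the only fact needed about the non-solution $\unkiii$ is the decay of its $\ell^{q}$ norm, so you never run the full iteration machinery on it; the price is that the constant-matching in \eqref{eq:linf_meas_nn} must be verified by hand via the scaling relation $t(m\theta(t))^{p-2}\fkf(\theta(t)^{-1})=1$ and \eqref{eq:fkf_above}--\eqref{eq:fkf_below}, which you do correctly. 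The step you rightly flag as delicate --- the summation by parts for the equation satisfied by $\unkiii$ on the infinite graph --- is exactly what the paper's cutoff argument (Lemma~\ref{l:cacc2} together with Remark~\ref{r:prelim_diff}) supplies, and your energy identity is the $h=0$ case of \eqref{eq:decay_ii}, so that ingredient is fully available.
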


Let us comment briefly on the existing literature on the non-linear
$p$-Laplacian in graphs. The papers \cite{Mugnolo:2013},
\cite{Hua:Mugnolo:2015}, deal with the Cauchy problem applying
techniques inspired from the theory of semigroups of continuous
differential operators. They consider a more general variety of
weighted graphs and operators than we do here, dealing e.g., with
existence, uniqueness, time regularity, possible extinction in a
finite time. However our results do not seem to be easily reached by
this approach. We also quote \cite{Keller:Mugnolo:2016} where a
connection between Cheeger constants and the eigenvalues of the
$p$-laplacian is drawn in a very flexible setting.
\\
Boundary problems on finite subgraphs are also considered in several
papers dealing with features like blow up or extinction; we
quote only \cite{Chung:Choi:2014} and \cite{Chung:Park:2017}.

The case of the discrete linear Laplacian where $p=2$ is more
classical, also for its connections with probability theory (see e.g.,
\cite{Andres:etal:2013} and references therein), and is often attacked
by means of suitable parallels with the theory of heat kernels in
manifolds. We quote \cite{Coulhon:Grigoryan:1998},
\cite{Barlow:Coulhon:Grigoryan:2001} where a connection is drawn
between properties of heat kernels, of graphs and Faber-Krahn
functions.
\\
In \cite{Lin:Wu:2017} heat kernels are used to study the blow up of
solutions to the Cauchy problem for a semilinear equation on a
possibly infinite graph.

The subject of diffusion in graphs is popular also owing to its
applicative interest. We refer the reader to \cite{Mugnolo:2013},
\cite{Elmoataz:Toutain:Tenbrinck:2015} and to the references therein
for more on this point.

Finally we recall the papers \cite{Bakry:etal:1995a},
\cite{Ostrovskii:2005} and books \cite{Chung:SGT}, \cite{Grigoryan:AG}
for basic information on functional analysis on graphs and manifolds.

We mention that in our setting it is still valid the argument in
\cite{Bonforte:Grillo:2007} showing that optimal decay rates imply suitable
embeddings.

Here we look essentially at the approach of \cite{DiBenedetto:Herrero:1989} and \cite{Andreucci:Tedeev:2015}.

The paper is organized as follows: Section~\ref{s:prelim} is devoted
to preliminary material.  Proposition~\ref{p:linf_meas} and its
Corollary~\ref{co:linf_int} are proved in Section~\ref{s:linf}, while
Section~\ref{s:l1} contains the proof of Theorem~\ref{t:l1} and
Section~\ref{s:bbl} deals with Theorem~\ref{p:bbl} and
Corollary~\ref{p:bbl2}. Finally Theorem~\ref{t:decay} is proved in
Section~\ref{s:decay}.

\subsection{Examples}
\label{s:examples}
1) As a first example we consider the case of the standard lattice
$G=\Z^{\SpDim}$, where one can take $\fkf(v)=\gamma_{0}v^{-p/\SpDim}$,
according to the results of \cite{Wang:Wang:1977},
\cite{Ostrovskii:2005}. This is the case where comparison with the
Cauchy problem for the continuous $p$-Laplacian is more straightforward. In this
case
\begin{equation}
  \label{eq:dcf_euc}
  \dcf_{r}(s)
  =
  \gamma_{0}
  s^{\frac{\SpDim(p-2)+pr}{\SpDim r}}
  \,,
  \qquad
  s>0
  \,, 
\end{equation}
and for example estimate \eqref{eq:l1_nn} becomes
\begin{equation}
  \label{eq:l1_nn_grid}
  \norma{\unk(t)}{\ell^{\infty}(V)}
  \le
  \gamma
  \norma{\unk_{0}}{\ell^{1}(V)}^{\frac{p}{\SpDim(p-2)+p}}
  t^{-\frac{\SpDim}{\SpDim(p-2)+p}}
  \,,
\end{equation}
while the critical radius for expansion of mass in \eqref{eq:bbl_nn} amounts to
\begin{equation}
  \label{eq:bbl_nn_grid}
  R\ge
  \gamma
  \norma{\unk_{0}}{\ell^{1}(V)}^{\frac{p-2}{\SpDim(p-2)+p}}
  t^{\frac{1}{\SpDim(p-2)+p}}
  \,.
\end{equation}
We remark that both results formally coincide with the corresponding
ones for the continuous $p$-Laplacian in $\RN$, see
\cite{DiBenedetto:Herrero:1989}.
\\
Next we apply Theorem~\ref{t:decay} to the following initial data: for
$x=(x_{1},\dots,x_{\SpDim})\in\Z^{\SpDim}$ set
$\unk_{0}(x)=(\abs{x_{1}}+\dots+\abs{x_{\SpDim}})^{-\alpha}$ for a
given $0<\alpha<\SpDim$. Let us write here $a(s)\simeq b(s)$ if
$\gamma_{0}a(s)\le b(s)\le \gamma a(s)$ for two constants independent
of $s$.  One can see that
\begin{equation*}
  \norma{\unk_{0}}{\ell^{1}(B_{R}(0))}
  \simeq
  R^{\SpDim-\alpha}
  \,;
  \qquad
  \norma{\unk_{0}}{\ell^{q}(V\setminus B_{R}(0))}
  \simeq
  R^{\SpDim-\alpha q}
  \,,
\end{equation*}
for all $q>\SpDim/\alpha$. Therefore in this case 
\begin{equation*}
  \slt_{\unk_{0}}(R,0)
  \simeq
  R^{\alpha(p-2)+p}
  \,,
\end{equation*}
and the estimate in \eqref{eq:decay_n} essentially amounts to the
decay rate $t^{-\alpha/(\alpha(p-2)+p)}$, which is the expected one in
view of the results of \cite{Tedeev:1991}.

2) One can treat also other examples of product graphs; for instance
if $H$ is a finite connected graph we let $G=H\times \Z^{\SpDim}$ and
recover results similar to the ones of the previous example.

3) All examples where the Faber-Krahn function is estimated for $p=2$
yield also examples in our case of $p>2$, as it follows from applying
H\"older's inequality; see e.g., \cite{Coulhon:Grigoryan:1998},
\cite{Barlow:Coulhon:Grigoryan:2001}.

\section{Preliminary material}
\label{s:prelim}

We use for $f:V\to \R$ the notation
\begin{equation*}
  \dif_{y}f(x)
  =
  f(y)-f(x)
  =
  -
  \dif_{x}f(y)
  \,,
  \qquad
  x\,,y\in V
  \,.
\end{equation*}

\subsection{Caccioppoli type inequalities}
\label{s:prelim_cacc}
\begin{lemma}
  \label{l:monot}
  Let $q>0$, $p>2$, $h\ge 0$, $\unk$, $\unkii:V\to\R$. Then for all $x$, $y\in V$
  \begin{multline}
    \label{eq:monot_n}
    \big(
    \abs{\dif_{y}\unk(x)}^{p-2}
    \dif_{y}\unk(x)
    -
    \abs{\dif_{y}\unkii(x)}^{p-2}
    \dif_{y}\unkii(x)
    \big)
    \dif_{y}\ppos{\unk(x)-\unkii(x)-h}^{q}
    \\
    \ge
    \gamma_{0}
    \Abs{
      \dif_{y}\ppos{\unk(x)-\unkii(x)-h}^{\frac{q-1+p}{p}}
    }^{p}
    \,.
  \end{multline}
\end{lemma}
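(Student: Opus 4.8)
The plan is to establish the inequality pointwise in $x$ and $y$ by reducing it to a one-dimensional convexity estimate. Fix $x,y\in V$ and write $a=\dif_{y}\unk(x)$, $b=\dif_{y}\unkii(x)$, so that the first factor on the left-hand side of \eqref{eq:monot_n} is the classical monotonicity expression $\big(\abs{a}^{p-2}a-\abs{b}^{p-2}b\big)(a-b)$ up to the substitution one must make to account for the cutoff; more precisely I would set $A=\unk(x)-\unkii(x)$ and $A'=\unk(y)-\unkii(y)$, and note that $a-b=\dif_{y}A(x)=A'-A$ while $\dif_{y}\ppos{A-h}^{q}=\ppos{A'-h}^{q}-\ppos{A-h}^{q}$. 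The key is that $\big(\abs{a}^{p-2}a-\abs{b}^{p-2}b\big)$ and $(A'-A)$ have the same sign, so the left-hand side is bounded below by $\gamma_0\abs{a-b}^{p-2}(a-b)\cdot\big(\ppos{A'-h}^q-\ppos{A-h}^q\big)$, using the standard vector inequality $\big(\abs{a}^{p-2}a-\abs{b}^{p-2}b\big)(a-b)\ge \gamma_0\abs{a-b}^{p}$ together with the fact that, on the set where the $\ppos{\cdot}^q$-difference is nonzero, $a-b$ and that difference are comparable in sign.

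Next I would separate cases according to which of $A,A'$ exceeds $h$. By the antisymmetry $\dif_{y}f(x)=-\dif_{x}f(y)$ we may assume without loss of generality $A'\ge A$; then if $A\le h$ we have $\ppos{A-h}^q=0$ and we must bound $(A'-A)\cdot\ppos{A'-h}^q$ from below by $\ppos{A'-h}^{q-1+p}$-type quantities — here $A'-A\ge A'-h=\ppos{A'-h}$ and the claim follows from $|(A'-h)^{+}|^{p}\ge \gamma_0\big|(A'-h)^{+,(q-1+p)/p}-0\big|^{p}$ which is an identity. If instead $A>h$, both cutoffs are active and one is reduced to the elementary inequality
\begin{equation*}
  (\beta-\alpha)^{p-2}(\beta-\alpha)\big(\beta^{q}-\alpha^{q}\big)
  \ge
  \gamma_{0}\big(\beta^{\frac{q-1+p}{p}}-\alpha^{\frac{q-1+p}{p}}\big)^{p}
  \,,
  \qquad 0\le\alpha\le\beta
  \,,
\end{equation*}
with $\alpha=A-h$, $\beta=A'-h$. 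This last inequality is the real content: it is proved by homogeneity (set $\alpha=s\beta$, $s\in[0,1]$, and both sides scale like $\beta^{q-1+p}$) followed by a one-variable estimate comparing $(1-s)^{p-1}(1-s^{q})$ with $\big(1-s^{(q-1+p)/p}\big)^{p}$ on $[0,1]$, e.g. by writing $1-s^{q}=\int_{s}^{1} q\sigma^{q-1}\di\sigma$ and $1-s^{(q-1+p)/p}=\int_{s}^{1}\frac{q-1+p}{p}\sigma^{(q-1)/p}\di\sigma$ and applying Jensen or Hölder to bound the second integral by a power of the first times a power of $(1-s)$.

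I expect the main obstacle to be precisely the bookkeeping in this homogeneity reduction: one must check the inequality $(1-s)^{p-1}(1-s^{q})\ge\gamma_{0}\big(1-s^{(q-1+p)/p}\big)^{p}$ uniformly for $s\in[0,1)$, and verify that the constant $\gamma_{0}$ can be taken to depend only on $p$ and $q$ (both behaviors near $s\to 1^{-}$, where both sides vanish to order $(1-s)^{p}$, and near $s=0$ must be controlled). The case analysis on the signs and on the positions of $A,A'$ relative to $h$ is routine but must be done carefully because the cutoff $\ppos{\cdot}$ is not smooth; the sign-monotonicity of $t\mapsto \abs{t}^{p-2}t$ is what makes every case collapse to the single scalar inequality above, and once that inequality is in hand the lemma follows immediately upon relabeling.
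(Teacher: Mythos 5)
Your proposal is correct and follows essentially the same route as the paper: reduce, via the classical monotonicity inequality for $s\mapsto\abs{s}^{p-2}s$ and the sign agreement between $\dif_{y}(\unk-\unkii)(x)$ and $\dif_{y}\ppos{\unk(x)-\unkii(x)-h}^{q}$, to the scalar inequality $(\beta-\alpha)^{p-1}(\beta^{q}-\alpha^{q})\ge\gamma_{0}\bigl(\beta^{\frac{q-1+p}{p}}-\alpha^{\frac{q-1+p}{p}}\bigr)^{p}$ for $0\le\alpha\le\beta$, with the degenerate case handled exactly as you do. The obstacle you anticipate in the homogeneity reduction does not in fact arise, because the second route you mention is the one the paper takes: H\"older's inequality in the form $\bigl(\int_{\alpha}^{\beta}s^{(q-1)/p}\di s\bigr)^{p}\le\bigl(\int_{\alpha}^{\beta}s^{q-1}\di s\bigr)\bigl(\int_{\alpha}^{\beta}\di s\bigr)^{p-1}$ yields the uniform constant $\gamma_{0}(p,q)$ directly, with no one-variable analysis needed.
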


\begin{proof}
  First we remark that we may assume $h=0$, by renaming
  $\tilde\unkii=\unkii+h$.  The corresponding version of
  \eqref{eq:monot_n} clearly holds true if
  $\dif_{y}\unk(x)=\dif_{y}\unkii(x)$. 

  If $\dif_{y}\unk(x)\not=\dif_{y}\unkii(x)$ the left hand side of
  \eqref{eq:monot_n} with $h=0$ can be written as, on appealing also
  to a classical elementary result in monotone operators, see
  \cite{DiB:dpe},
  \begin{multline}
    \label{eq:monot_i}
    \big(
    \abs{\dif_{y}\unk(x)}^{p-2}
    \dif_{y}\unk(x)
    -
    \abs{\dif_{y}\unkii(x)}^{p-2}
    \dif_{y}\unkii(x)
    \big)
    \dif_{y}(\unk(x)-\unkii(x))
    \,
    \mathcal{A}
    \\
    \ge
    \gamma_{0}(p)
    \abs{\dif_{y}(\unk(x)-\unkii(x))}^{p}
    \,
    \mathcal{A}
    \,,
  \end{multline}
  where we define
  \begin{equation*}
    \mathcal{A}
    =
    \frac{
      \dif_{y}\ppos{\unk(x)-\unkii(x)}^{q}
    }{
      \dif_{y}(\unk(x)-\unkii(x))
    }
    \ge 0
    \,.
  \end{equation*}

  On the other hand, we write the right hand side of \eqref{eq:monot_n} with $h=0$ as
  \begin{equation}
    \label{eq:monot_ii}
    \abs{\dif_{y} (\unk(x)-\unkii(x))}^{p}
    \,
    \mathcal{B}
    \,,
    \qquad
    \mathcal{B}
    :=
    \Abs{
      \frac{
        \dif_{y}\ppos{\unk(x)-\unkii(x)}^{\frac{q-1+p}{p}}
      }{
        \dif_{y}(\unk(x)-\unkii(x))
      }
    }^{p}
    \,.
  \end{equation}
  Therefore we have only to prove that
  $\mathcal{A}\ge \gamma_{0}\mathcal{B}$. Clearly in doing so we may
  assume without loss of generality that
  \begin{equation*}
    \unk(y)
    -
    \unkii(y)
    >
    \unk(x)
    -
    \unkii(x)
    \,.
  \end{equation*}
  Hence it is left to prove that
  \begin{multline}
    \label{eq:monot_iii}
    \big[
    \unk(y)-\unkii(y)
    -
    (\unk(x)-\unkii(x))
    \big]^{p-1}
    \big[
    \ppos{\unk(y)-\unkii(y)}^{q}
    -
    \ppos{\unk(x)-\unkii(x)}^{q}
    \big]
    \\
    \ge
    \gamma_{0}
    \Big[
    \ppos{\unk(y)-\unkii(y)}^{\frac{q-1+p}{p}}
    -
    \ppos{\unk(x)-\unkii(x)}^{\frac{q-1+p}{p}}
    \Big]^{p}
    \,.
  \end{multline}
  Denote
  \begin{equation*}
    a
    =
    \unk(y)
    -
    \unkii(y)
    \,,
    \qquad
    b
    =
    \unk(x)
    -
    \unkii(x)
    \,.
  \end{equation*}
  If $b\le 0$, \eqref{eq:monot_iii} is obviously satisfied with
  $\gamma_{0}=1$. If $b>0$, by H\"older's inequality we have
  \begin{multline}
    \label{eq:monot_iv}
    \big[
    a^{\frac{q-1+p}{p}}
    -
    b^{\frac{q-1+p}{p}}
    \big]^{p}
    =
    \Big[
    \frac{q-1+p}{p}
    \int_{b}^{a}
    s^{\frac{q-1}{p}}
    \di s
    \Big]^{p}
    \\
    \le
    \gamma(q,p)
    \Big[
    \int_{b}^{a}
    s^{q-1}
    \di s
    \Big]
    \Big[
    \int_{b}^{a}
    \di s
    \Big]^{p-1}
    \le
    \gamma(q,p)
    (a^{q}-b^{q})
    (a-b)^{p-1}
    \,,
  \end{multline}
  proving \eqref{eq:monot_iii} and concluding the proof.
\end{proof}

In the following all radii of balls in $G$ will be assumed to be natural numbers.
Let $R_{2}\ge R_{1}+1$, $R_{1}$, $R_{2}>0$; we define
the cutoff function $\zeta$ in $B_{R_{2}}(x_{0})$ by means of
\begin{alignat*}{2}
  \zeta(x)
  &=
  1
  \,,
  &\qquad&
  x\in B_{R_{1}}(x_{0})
  \,,
  \\
  \zeta(x)
  &=
  \frac{
    R_{2}
    -
    d(x,x_{0})
  }{
    R_{2}
    -
    R_{1}
  }
  \,,
  &\qquad&
  x\in B_{R_{2}}\setminus B_{R_{1}}(x_{0})
  \,,
  \\
  \zeta(x)
  &=
  0
  \,,
  &\qquad&
  x\not \in B_{R_{2}}(x_{0})
  \,.
\end{alignat*}
The function $\zeta$ is chosen so that
\begin{equation*}
  \abs{\dif_{y}\zeta(x)}
  =
  \abs{
    \zeta(y)
    -
    \zeta(x)
  }
  \le
  \frac{1}{R_{2}-R_{1}}
  \,,
  \qquad
  x\sim y
  \,.
\end{equation*}
For $\tau_{1}>\tau_{2}>0$ we also define the standard nonnegative cutoff function $\eta\in C^{1}(\R)$ such that
\begin{equation*}
  \eta(t)
  =0
  \,,
  \,\,\,
  t\ge \tau_{1}
  \,;
  \quad
  \eta(t)
  =
  0
  \,,
  \,\,\,
  t\le \tau_{2}
  \,;
  \quad
  0\le\eta'(t)\le \frac{2}{\tau_{1}-\tau_{2}}
  \,,
  \,\,\,
  t\in\R
  \,.
\end{equation*}

Our next Lemma is not used in the sequel; we present it here to substantiate our claim made in the Introduction that suitable local Caccioppoli type inequalities are available in the nonlinear setting, and also for its possible independent interest. The proof is somehow more complex than in the continuous case.
\begin{lemma}
  \label{l:cacc}
  Let $\unk$ be a solution of \eqref{eq:pde} in $V\times (0,T)$, $x_{0}\in V$. Then
  for $T>\tau_{1}>\tau_{2}>0$, $R_{2}>R_{1}+1$, $R_{1}>0$, $h>k>0$, $1>\theta>0$ we
  have
  \begin{equation}
    \label{eq:cacc_n}
    \begin{split}
      &\sup_{\tau_{1}<\tau<t}
      \sum_{x\in B_{R_{1}}(x_{0})}
      \ppos{\unk(x,\tau)-h}^{\theta+1}
      \zeta(x)^{p}
      \dgw(x)
      \\
      &\quad+
      \int_{\tau_{1}}^{t}
      \sum_{x\in B_{R_{1}}(x_{0}),y\in V}
      \Abs{
        \dif_{y}
        \ppos{\unk(x,\tau)-h}^{\frac{p+\theta-1}{p}}
      }^{p}
      \wgt(x,y)
      \di\tau
      \\
      &\qquad\le
      \frac{\gamma}{\tau_{1}-\tau_{2}}
      \int_{\tau_{2}}^{t}
      \sum_{x\in B_{R_{2}}(x_{0})}
      \ppos{\unk(x,\tau)-h}^{\theta+1}
      \dgw(x)
      \di\tau
      +
      \gamma
      A^{\frac{1}{p}}
      B^{\frac{p-1}{p}}
      +
      \gamma
      A
      \,,
    \end{split}
  \end{equation}
  where
  \begin{align*}
    A&=
       \frac{1}{(R_{2}-R_{1})^{p}}
       \int_{\tau_{2}}^{t}
       \sum_{x\in B_{R_{2}}(x_{0})}
       \ppos{\unk(x,\tau)-k}^{p+\theta-1}
       \dgw(x)
       \di\tau
       \,,
    \\
    B&=
       h^{p}
       (h-k)^{\theta-1}
       \int_{\tau_{2}}^{t}
       \msw(B_{R_{2}}(x_{0})\cap\{2h\ge \unk(x,\tau)>h\})
       \di\tau
       \,.
  \end{align*}
\end{lemma}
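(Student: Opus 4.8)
The plan is to test \eqref{eq:pde} against a truncated, space-- and time--localized power of $\unk$, namely
\[
  \varphi(x,\tau)=\ppos{\unk(x,\tau)-h}^{\theta}\,\zeta(x)^{p}\,\eta(\tau),
\]
summing over $x\in V$ and integrating over $\tau\in(\tau_{2},\hat t)$ for an arbitrary $\hat t\in(\tau_{1},t)$; the sums are finite since $\{\unk(\cdot,\tau)>h\}$ has finite measure (Remark~\ref{r:lp_scale}). From the time derivative, using the pointwise identity $\partial_{\tau}\unk\cdot\ppos{\unk-h}^{\theta}=(\theta+1)^{-1}\partial_{\tau}\ppos{\unk-h}^{\theta+1}$ and integrating by parts in $\tau$ (with $\eta$ the standard cutoff: vanishing near $\tau_{2}$, equal to $1$ above $\tau_{1}$, $0\le\eta'\le 2/(\tau_{1}-\tau_{2})$), one obtains the supremum term on the left of \eqref{eq:cacc_n} — after bounding $\zeta\le 1$ and enlarging the $\tau$--interval in the error — at the cost of $\tfrac{\gamma}{\tau_{1}-\tau_{2}}\int_{\tau_{2}}^{t}\sum_{x\in B_{R_{2}}(x_{0})}\ppos{\unk-h}^{\theta+1}\dgw\di\tau$. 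Letting $\hat t\uparrow t$ at the very end recovers the stated intervals.

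For the diffusion term, summation by parts based on the symmetry of $\wgt$ turns $-\sum_{x}(\text{discrete }p\text{-Laplacian})\,\varphi\,\dgw$ into $\tfrac{\eta(\tau)}{2}\sum_{x,y}\abs{\dif_{y}\unk(x,\tau)}^{p-2}\dif_{y}\unk(x,\tau)\,\dif_{y}\big(\ppos{\unk(x,\tau)-h}^{\theta}\zeta(x)^{p}\big)\wgt(x,y)$. I would apply the exact discrete Leibniz rule $\dif_{y}(fg)(x)=\tfrac12(f(x)+f(y))\,\dif_{y}g(x)+\tfrac12(g(x)+g(y))\,\dif_{y}f(x)$ with $f=\ppos{\unk-h}^{\theta}$, $g=\zeta^{p}$, splitting the sum into a \emph{main} part carrying the factor $\tfrac12(\zeta(x)^{p}+\zeta(y)^{p})\,\dif_{y}\ppos{\unk(x)-h}^{\theta}$ and a \emph{cross} part carrying $\tfrac12(\ppos{\unk(x)-h}^{\theta}+\ppos{\unk(y)-h}^{\theta})\,\dif_{y}(\zeta^{p})(x)$. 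Lemma~\ref{l:monot} with $\unkii\equiv 0$, $q=\theta$, and level $h$ bounds the main summand from below, edge by edge, by $\gamma_{0}\tfrac12(\zeta(x)^{p}+\zeta(y)^{p})\Abs{\dif_{y}\ppos{\unk(x)-h}^{(p+\theta-1)/p}}^{p}\ge 0$; discarding the nonnegative edges with $x\notin B_{R_{1}}(x_{0})$ and using $\tfrac12(\zeta(x)^{p}+\zeta(y)^{p})\ge\tfrac12\zeta(x)^{p}=\tfrac12$ there, the main part is $\ge\gamma_{0}\,\eta(\tau)\sum_{x\in B_{R_{1}}(x_{0}),\,y\in V}\Abs{\dif_{y}\ppos{\unk(x,\tau)-h}^{(p+\theta-1)/p}}^{p}\wgt(x,y)$, that is, a constant times the gradient term on the left of \eqref{eq:cacc_n}.

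The cross part is where the discreteness bites, and this is the step I expect to be the main obstacle. Set $\sigma=(p+\theta-1)/p\in(0,1)$. Since $\abs{\dif_{y}(\zeta^{p})(x)}\le\gamma(R_{2}-R_{1})^{-1}$, the cross part is bounded by $\tfrac{\gamma}{R_{2}-R_{1}}\eta\sum_{x,y}\abs{\dif_{y}\unk(x)}^{p-1}\big(\ppos{\unk(x)-h}^{\theta}+\ppos{\unk(y)-h}^{\theta}\big)\wgt(x,y)$, the relevant edges being those meeting $B_{R_{2}}(x_{0})$ with $\max(\unk(x),\unk(y))>h$. In the continuous theory this is absorbed by Young's inequality via the chain rule $\nabla\ppos{\unk-h}^{\sigma}\propto\ppos{\unk-h}^{(\theta-1)/p}\nabla\unk$; on a graph that identity has no counterpart on the edges \emph{straddling} the level set $\{\unk=h\}$, where $\abs{\dif_{y}\unk(x)}$ may be far larger than $\abs{\dif_{y}\ppos{\unk(x)-h}^{\sigma}}$. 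I would therefore split the edges into: (i) $\unk(x),\unk(y)>h$; (ii) exactly one of $\unk(x),\unk(y)$ exceeds $h$ and $\max(\unk(x),\unk(y))\le 2h$; (iii) exactly one exceeds $h$ and $\max(\unk(x),\unk(y))>2h$. On (i) and (iii), using $\ppos{\unk(z)-h}=\unk(z)-h$ when $\unk(z)>h$ and that $\dif_{y}\pneg{\unk(x)-h}$ lives on edges with $\min(\unk(x),\unk(y))\le h$, together with elementary mean-value/convexity estimates (cf. \eqref{eq:monot_iv}; on (iii) also $\unk_{\max}<2(\unk_{\max}-h)$), one gets
\[
  \abs{\dif_{y}\unk(x)}^{p-1}\big(\ppos{\unk(x)-h}^{\theta}+\ppos{\unk(y)-h}^{\theta}\big)
  \le\gamma\,\Abs{\dif_{y}\ppos{\unk(x)-h}^{\sigma}}^{p-1}\big(\ppos{\unk(x)-k}^{\sigma}+\ppos{\unk(y)-k}^{\sigma}\big),
\]
and Young's inequality with exponents $p/(p-1)$ and $p$, keeping $\abs{\dif_{y}\zeta^{p}}$ with the lower-order factor and using $\abs{\dif_{y}\zeta^{p}}^{p}\le\gamma(R_{2}-R_{1})^{-p}$ and $\sigma p=p+\theta-1$, absorbs an arbitrarily small multiple of the gradient term and contributes $\gamma A$. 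On class (ii) the gradient term offers no control: here I bound crudely $\abs{\dif_{y}\unk(x)}\le\unk_{\max}\le 2h$, reindex the edge sum by the vertex $z$ with $\unk(z)\in(h,2h]$ (so $\sum_{y\sim z}\wgt(z,y)=\dgw(z)$), bound $\ppos{\unk(z)-h}^{\theta}\le(h-k)^{-(p-1)(1-\theta)/p}\ppos{\unk(z)-k}^{\sigma}$ on that support, and apply H\"older's inequality in the $(\tau,z)$--sum with exponents $p$ and $p/(p-1)$: the $\ell^{p}$ factor is controlled by $(R_{2}-R_{1})A^{1/p}$ (using $\ppos{\unk-k}^{\sigma p}=\ppos{\unk-k}^{p+\theta-1}$), and the $\ell^{p/(p-1)}$ factor by powers of $h$ and $h-k$ times $\big(\int_{\tau_{2}}^{t}\msw(B_{R_{2}}(x_{0})\cap\{2h\ge\unk>h\})\di\tau\big)^{(p-1)/p}$. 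A check of exponents shows that the powers of $h$ and $h-k$ combine exactly into $\gamma A^{1/p}B^{(p-1)/p}$; this is precisely why $B$ carries the factor $h^{p}(h-k)^{\theta-1}$, and it is here that $\theta<1$ is used.

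Collecting the three contributions, moving the absorbed part of the cross term to the left, taking the supremum over $\hat t\in(\tau_{1},t)$ and letting $\hat t\uparrow t$ gives \eqref{eq:cacc_n}. The genuine difficulty — with no continuous analogue — is the treatment in the cross term of the edges across $\{\unk=h\}$; the extra summand $B$, absent from the classical Caccioppoli inequality, is the unavoidable price for the loss of locality of the discrete gradient.
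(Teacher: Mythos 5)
Your overall architecture is the one the paper uses: the same test function $\ppos{\unk-h}^{\theta}\zeta^{p}\eta$, Lemma~\ref{l:monot} with $\unkii=0$ for the coercive part, and --- crucially --- the same three--way classification of the edges according to the position of $\unk(x),\unk(y)$ relative to $h$ and $2h$, with the class ``one endpoint in $(h,2h]$, the other below $h$'' producing the term $B$ exactly as you explain. Your variations are sound and in one place arguably cleaner: the paper reserves a quarter $J_{22}$ of the \emph{original} monotone quantity $\abs{\dif_{y}\unk}^{p-2}\dif_{y}\unk\,\dif_{y}\ppos{\unk-h}^{\theta}$ and absorbs the cross term into it after a Young step that keeps $\abs{\dif_{y}\unk}^{p}H(x,y;k)^{\theta-1}$, then recovers $A^{1/p}B^{(p-1)/p}$ by optimizing the free parameter $\eps$ in $\gamma\eps B+\gamma\eps^{1-p}A$; you instead convert the cross term pointwise into $\Abs{\dif_{y}\ppos{\unk-h}^{\sigma}}^{p-1}\ppos{\unk-k}^{\sigma}$ on the two tractable classes (your mean--value estimates there are correct, since $\sigma<1$) and handle the bad class by a direct H\"older argument whose exponent bookkeeping does land on $A^{1/p}B^{(p-1)/p}$.

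There is, however, one step that fails as written: the absorption of the Young remainder $\eps\,\Abs{\dif_{y}\ppos{\unk-h}^{\sigma}}^{p}$. After you apply Lemma~\ref{l:monot}, the only coercive term at your disposal carries the weight $\tfrac12(\zeta(x)^{p}+\zeta(y)^{p})$ (and the piece you keep on the left is further restricted to $x\in B_{R_{1}}$), whereas your Young step, which puts \emph{all} of $\abs{\dif_{y}\zeta^{p}}$ into the low--order factor, produces an \emph{unweighted} gradient term supported on annulus edges. Near $\partial B_{R_{2}}$ the weight $\zeta(x)^{p}+\zeta(y)^{p}$ is only of size $(R_{2}-R_{1})^{-p}$, so absorption forces $\eps\lesssim(R_{2}-R_{1})^{-p}$ and the conjugate term becomes $\eps^{1-p}(R_{2}-R_{1})^{-p}\sim(R_{2}-R_{1})^{p(p-2)}$, destroying the $(R_{2}-R_{1})^{-p}$ prefactor in $A$. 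The fix is the standard one, and is precisely what the paper's computation of $J_{23}$ does: write $\abs{\dif_{y}\zeta(x)^{p}}\le p\,\abs{\dif_{y}\zeta(x)}\,(\zeta(x)+\zeta(y))^{p-1}$ and keep the factor $(\zeta(x)+\zeta(y))^{p-1}$ with the \emph{high}--order term in Young, so that the remainder carries $(\zeta(x)+\zeta(y))^{p}\le 2^{p-1}(\zeta(x)^{p}+\zeta(y)^{p})$ and is absorbed by the weighted coercive term with a uniform choice of $\eps$, while the low--order factor still yields $\gamma\eps^{1-p}(R_{2}-R_{1})^{-p}\ppos{\unk-k}^{p+\theta-1}$, i.e.\ $\gamma A$. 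With that correction your argument goes through.
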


\begin{remark}
  \label{r:caccio}
  The term $A^{1/p}B^{(p-1)/p}$ in \eqref{eq:cacc_n} can be reduced to
  one containing only $A$ by means of Young's and Chebychev's
  inequalities.
\end{remark}

\begin{proof}
  We multiply \eqref{eq:pde} against
  $\zeta(x)^{p}\eta(t)^{p}\ppos{\unk(x,t)-h}^{\theta}$ and apply the
  well known formula of integration by parts
  \begin{multline*}
    \sum_{x,y\in V}
    \abs{\dif_{y}\unk(x)}^{p-2}
    \dif_{y}\unk(x)
    f(x)
    \wgt(x,y)
    \\=
    -
    \frac{1}{2}
    \sum_{x,y\in V}
    \abs{\dif_{y}\unk(x)}^{p-2}
    \dif_{y}\unk(x)
    \dif_{y}f(x)
    \wgt(x,y)
    \,,
  \end{multline*}
  where $f:V\to \R$ has finite support. Below we denote $B_{R}(x_{0})=B_{R}$ for simplicity of notation.

  We obtain
  \begin{multline}
    \label{eq:cacc_i}
    J_{1}+J_{2}
    :=
    \frac{1}{\theta+1}
    \sum_{x\in B_{R_{2}}}
    \ppos{\unk(x,t)-h}^{\theta+1}
    \zeta(x)^{p}
    \eta(t)^{p}
    \dgw(x)
    \\
    +
    \frac{1}{2}
    \int_{0}^{t}
    \sum_{x,y\in V}
    \abs{\dif_{y}\unk(x,\tau)}^{p-2}
    \dif_{y}\unk(x,\tau)
    \dif_{y}[
    \ppos{\unk(x,\tau)-h}^{\theta}
    \zeta(x)^{p}
    ]
    \wgt(x,y)
    \eta(\tau)^{p}
    \di\tau
    \\
    =
    \frac{p}{\theta+1}
    \int_{0}^{t}
    \sum_{x\in B_{R_{2}}}
    \ppos{\unk(x,\tau)-h}^{\theta+1}
    \zeta(x)^{p}
    \eta(\tau)^{p-1}
    \eta'(\tau)
    \dgw(x)
    \di\tau
    =:J_{3}
    \,.
  \end{multline}
  We split $J_{2}$ according to the equality 
  \begin{equation*}
    \dif_{y}[
    \ppos{\unk(x,\tau)-h}^{\theta}
    \zeta(x)^{p}
    ]
    =
    \zeta(y)^{p}
    \dif_{y}
    \ppos{\unk(x,\tau)-h}^{\theta}
    +
    \ppos{\unk(x,\tau)-h}^{\theta}
    \dif_{y} \zeta(x)^{p}
    \,.
  \end{equation*}
  Next we appeal to Lemma~\ref{l:monot} with $\unkii=0$ to get
  \begin{equation}
    \label{eq:cacc_iii}
    \abs{\dif_{y}\unk(x,\tau)}^{p-2}
    \dif_{y}\unk(x,\tau)
    \dif_{y}[
    \ppos{\unk(x,\tau)-h}^{\theta}
    ]
    \ge
    \gamma_{0}
    \Abs{
      \dif_{y}
      \ppos{\unk(x,\tau)-h}^{\frac{p+\theta-1}{p}}
    }^{p}
    \,.
  \end{equation}
  Thus from \eqref{eq:cacc_i} we infer the bound
  \begin{equation}
    \label{eq:cacc_iv}
    J_{1}
    +
    J_{21}
    +
    J_{22}
    \le
    J_{3}
    +
    J_{23}
    \,,
  \end{equation}
  where
  \begin{align*}
    J_{21}
    &=
      \gamma_{0}
      \int_{0}^{t}
      \sum_{x,y\in V}
      \Abs{
      \dif_{y}
      \ppos{\unk(x,\tau)-h}^{\frac{p+\theta-1}{p}}
      }^{p}
      \zeta(y)^{p}
      \wgt(x,y)
      \eta(\tau)^{p}
      \di\tau
      \,,
    \\
    J_{22}
    &=
      \frac{1}{4}
      \int_{0}^{t}
      \sum_{x,y\in V}
      \abs{\dif_{y}\unk(x,\tau)}^{p-2}
      \dif_{y}\unk(x,\tau)
      \dif_{y}[
      \ppos{\unk(x,\tau)-h}^{\theta}
      ]
      \zeta(y)^{p}
      \wgt(x,y)
      \eta(\tau)^{p}
      \di\tau
      \,,
    \\
    J_{23}
    &=
      \frac{1}{2}
      \int_{0}^{t}
      \sum_{x,y\in V}
      \abs{\dif_{y}\unk(x,\tau)}^{p-1}
      \abs{\dif_{y}\zeta(x)^{p}}
      \ppos{\unk(x,\tau)-h}^{\theta}
      \eta(\tau)^{p}
      \wgt(x,y)
      \di\tau
      \,.
  \end{align*}
  The reason to preserve the fraction $J_{22}$ of $J_{2}$ (rather than
  treating it as in $J_{21}$) will become apparent presently. Let us
  introduce the functions
  \begin{gather*}
    H(x,y;r)
    =
    \max[
    \ppos{\unk(x,\tau)-r}
    ,
    \ppos{\unk(y,\tau)-r}
    ]
    \,,
    \\
    \chi_{x,y}
    =
    1
    \,,
    \quad
    \text{if $H(x,y;h)>0$;}
    \qquad
    \chi_{x,y}
    =
    0
    \,,
    \quad
    \text{if $H(x,y;h)=0$.}
  \end{gather*}
  Note that $r>0$ is arbitrary in the definition of $H$ but we fix $r=h$ in the definition of $\chi_{x,y}$.
  Next we select $0<k<h$; by elementary calculations and Young's inequality we get
  \begin{equation*}
    \begin{split}
      J_{23}
      &\le
      \frac{p}{2}
      \int_{0}^{t}
      \sum_{x,y\in V}
      \abs{\dif_{y} \unk(x,\tau)}^{p-1}
      \abs{\dif_{y} \zeta(x)}
      (\zeta(x)+\zeta(y))^{p-1}
      H(x,y;k)^{\theta}
      \chi_{x,y}
      \wgt(x,y)
      \eta(\tau)^{p}
      \di\tau
      \\
      &\le
      \eps
      \int_{0}^{t}
      \sum_{x,y\in V}
      \abs{\dif_{y}\unk(x,\tau)}^{p}
      (\zeta(x)^{p}+\zeta(y)^{p})
      H(x,y;k)^{\theta-1}
      \chi_{x,y}
      \wgt(x,y)
      \eta(\tau)^{p}
      \di\tau
      \\
      &\quad+
      \gamma \eps^{1-p}
      \int_{0}^{t}
      \sum_{x,y\in V}
      \abs{\dif_{y}\zeta(x)}^{p}
      H(x,y;k)^{p+\theta-1}
      \wgt(x,y)
      \eta(\tau)^{p}
      \di\tau
      =:
      J_{231}
      +
      J_{232}
      \,.
    \end{split}
  \end{equation*}
  We want to absorb partially the term $J_{231}$ into $J_{22}$, for a
  suitable choice of $\eps$. To this end we observe that by a change
  of variables we have
  \begin{equation*}
    \begin{split}
      J_{22}
      &=
      \frac{1}{4}
      \int_{0}^{t}
      \sum_{x,y\in V}
      \abs{\dif_{x}\unk(y,\tau)}^{p-1}
      \abs{\dif_{x}\ppos{\unk(y,\tau)-h}^{\theta}}
      \zeta(x)^{p}
      \wgt(y,x)
      \eta(\tau)^{p}
      \di\tau
      \\
      &=
      \frac{1}{4}
      \int_{0}^{t}
      \sum_{x,y\in V}
      \abs{\dif_{y}\unk(x,\tau)}^{p-1}
      \abs{\dif_{y}\ppos{\unk(x,\tau)-h}^{\theta}}
      \zeta(x)^{p}
      \wgt(x,y)
      \eta(\tau)^{p}
      \di\tau
      \\
      &=
      \frac{1}{8}
      \int_{0}^{t}
      \sum_{x,y\in V}
      \abs{\dif_{y}\unk(x,\tau)}^{p-1}
      \abs{\dif_{y}\ppos{\unk(x,\tau)-h}^{\theta}}
      (\zeta(x)^{p}+\zeta(y)^{p})
      \wgt(x,y)
      \eta(\tau)^{p}
      \di\tau
      \,.
    \end{split}
  \end{equation*}
  Then by elementary calculus
  \begin{multline}
    \label{eq:cacc_j}
    \chi_{x,y}
    \abs{\dif_{y}\ppos{\unk(x,\tau)-h}^{\theta}}
    \ge
    \chi_{x,y}
    \theta
    \abs{\dif_{y}\ppos{\unk(x,\tau)-h}}
    H(x,y;h)^{\theta-1}
    \\
    \ge
    \chi_{x,y}
    \theta
    \abs{\dif_{y}\ppos{\unk(x,\tau)-h}}
    H(x,y;k)^{\theta-1}
    \,.
  \end{multline}
  Next we discriminate three cases in \eqref{eq:cacc_j}, aggregating
  equivalent symmetric cases: i) $\unk(x,\tau)>h$, $\unk(y,\tau)>h$. In this
  case clearly
  \begin{equation*}
    \abs{\dif_{y}\ppos{\unk(x,\tau)-h}}
    =
    \abs{\dif_{y}\unk(x,\tau)}
    \,.
  \end{equation*}
  ii) $\unk(x,\tau)>2h$, $h\ge \unk(y,\tau)$.
  Then
  \begin{equation*}
    \abs{\dif_{y}\ppos{\unk(x,\tau)-h}}
    \ge
    \frac{\unk(x,\tau)}{2}
    \ge
    \frac{1}{2}
    \abs{\dif_{y}\unk(x,\tau)}
    \,.
  \end{equation*}
  iii) $2h \ge \unk(x,\tau)>h\ge \unk(y,\tau)$. In this case $J_{22}$
  does not offer any help. We rather bound directly this part of $J_{231}$ as shown below.

  Collecting the estimates above we see that, provided $\eps\le 1/16$,
  \begin{multline*}
    J_{231}
    \le
    J_{22}
    +
    \eps
    2^{p+2}
    h^{p}
    (h-k)^{\theta-1}
    \int_{\tau_{2}}^{t}
    \msw(B_{R_{2}}\cap\{2h\ge \unk(x,\tau)>h\})
    \di\tau
    \,.
  \end{multline*}
  Hence we have transformed \eqref{eq:cacc_iv} into
  \begin{equation}
    \label{eq:cacc_jj}
    J_{1}+J_{21}
    \le
    J_{3}
    +
    \gamma \eps B
    +
    \gamma \eps^{1-p}
    A
    \,,
  \end{equation}
  where $A$ and $B$ are as in the statement.
  
  Finally we check whether the root $\eps$ of $\eps B=\eps^{1-p}A$ is
  less than $1/16$; on distinguishing the cases $\eps\le 1/16$,
  $\eps>1/16$ we get the inequality in \eqref{eq:cacc_n}. 
\end{proof}

\begin{lemma}
  \label{l:cacc2}
  Let $\unk\in L^{\infty}(0,T;\ell^{q}(V))$, for a given $q> 1$, be a solution of \eqref{eq:pde} in $V\times(0,T)$. Then
  for all $T>\tau_{1}>\tau_{2}>0$, $h\ge0$, we have for all $0<t<T$
  \begin{multline}
    \label{eq:cacc2_n}
    \sup_{\tau_{1}<\tau<t}
    \sum_{x\in V}
    \ppos{\unk(x,\tau)-h}^{q}
    \dgw(x)
    +
    \int_{\tau_{1}}^{t}
    \sum_{x,y\in V}
    \Abs{\dif_{y}\ppos{\unk(x,\tau)-h}^{\frac{p+q-2}{p}}}^{p}
    \wgt(x,y)
    \di \tau
    \\
    \le
    \frac{\gamma}{\tau_{1}-\tau_{2}}
    \int_{\tau_{2}}^{t}
    \sum_{x\in V}
    \ppos{\unk(x,\tau)-h}^{q}
    \dgw(x)
    \di\tau
    \,.
  \end{multline}
  We have also, if condition \eqref{eq:init} is satisfied,
  \begin{multline}
    \label{eq:cacc2_nn}
    \sup_{0<\tau<t}
    \sum_{x\in V}
    \ppos{\unk(x,\tau)-h}^{q}
    \dgw(x)
    +
    \int_{0}^{t}
    \sum_{x,y\in V}
    \Abs{\dif_{y}\ppos{\unk(x,\tau)-h}^{\frac{p+q-2}{p}}}^{p}
    \wgt(x,y)
    \di \tau
    \\
    \le
    \gamma
    \int_{0}^{t}
    \sum_{x\in V}
    \ppos{\unk_{0}(x)-h}^{q}
    \dgw(x)
    \di\tau
    \,.
  \end{multline}
\end{lemma}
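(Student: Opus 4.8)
The plan is to prove both inequalities as plain energy estimates carried out on the whole of $V$ at once, so that, unlike in Lemma~\ref{l:cacc}, neither a spatial cutoff nor any nested-ball iteration is needed; the price paid is the global hypothesis $\unk\in L^{\infty}(0,T;\ell^{q}(V))$. By Remark~\ref{r:lp_scale} this bounds $\unk$ also in $\ell^{\infty}(V)$ and in $\ell^{q'}(V)$ for every $q'\ge q$, uniformly in $\tau$, and --- together with the identity $\sum_{y\in V}\wgt(x,y)=\dgw(x)$ --- this is exactly what makes all the sums below absolutely convergent and all the rearrangements legitimate; this is the device hinted at in Remark~\ref{r:lp_est}.

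First I would fix $\tau_{1}>\tau_{2}>0$ and a time cutoff $\eta\in C^{1}(\R)$ with $\eta\equiv 0$ on $(-\infty,\tau_{2}]$, $\eta\equiv 1$ on $[\tau_{1},+\infty)$ and $0\le\eta'\le 2/(\tau_{1}-\tau_{2})$, then multiply \eqref{eq:pde} by $\eta(\tau)^{p}\ppos{\unk(x,\tau)-h}^{q-1}$, sum over $x\in V$, and integrate over $\tau\in(0,s)$ for an arbitrary $s\in(\tau_{1},t)$. Since $\pder{}{\tau}\ppos{\unk(x,\tau)-h}^{q}=q\,\ppos{\unk(x,\tau)-h}^{q-1}\pder{\unk}{\tau}(x,\tau)$, integration by parts in time (with $\eta(0)=0$, $\eta(s)=1$) turns the parabolic contribution into
\begin{equation*}
  \frac{1}{q}\sum_{x\in V}\ppos{\unk(x,s)-h}^{q}\dgw(x)
  -
  \frac{p}{q}\int_{0}^{s}\sum_{x\in V}\ppos{\unk(x,\tau)-h}^{q}\,\eta(\tau)^{p-1}\eta'(\tau)\,\dgw(x)\di\tau
  \,.
\end{equation*}
For the diffusion contribution I would use the integration by parts formula in $V$ recalled in the proof of Lemma~\ref{l:cacc}, followed by Lemma~\ref{l:monot} with $\unkii=0$ and with $q-1$ in place of its exponent parameter $q$ (admissible since $q>1$), which yields pointwise
\begin{equation*}
  \abs{\dif_{y}\unk(x,\tau)}^{p-2}\dif_{y}\unk(x,\tau)\,\dif_{y}\ppos{\unk(x,\tau)-h}^{q-1}
  \ge
  \gamma_{0}\Abs{\dif_{y}\ppos{\unk(x,\tau)-h}^{\frac{p+q-2}{p}}}^{p}
  \,,
\end{equation*}
so that the diffusion contribution is at least $\tfrac{\gamma_{0}}{2}\int_{0}^{s}\sum_{x,y\in V}\abs{\dif_{y}\ppos{\unk(x,\tau)-h}^{(p+q-2)/p}}^{p}\wgt(x,y)\,\eta(\tau)^{p}\di\tau$.

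Combining the two, bounding $0\le\eta^{p-1}\eta'\le 2/(\tau_{1}-\tau_{2})$ on its support $(\tau_{2},\tau_{1})\subset(\tau_{2},t)$, discarding $\eta^{p}\le 1$ and keeping only $[\tau_{1},s]$ in the gradient integral, I would arrive at
\begin{equation*}
  \frac{1}{q}\sum_{x\in V}\ppos{\unk(x,s)-h}^{q}\dgw(x)
  +
  \frac{\gamma_{0}}{2}\int_{\tau_{1}}^{s}\sum_{x,y\in V}\Abs{\dif_{y}\ppos{\unk(x,\tau)-h}^{\frac{p+q-2}{p}}}^{p}\wgt(x,y)\di\tau
  \le
  \frac{2p}{q(\tau_{1}-\tau_{2})}\int_{\tau_{2}}^{t}\sum_{x\in V}\ppos{\unk(x,\tau)-h}^{q}\dgw(x)\di\tau
\end{equation*}
for every $s\in(\tau_{1},t)$. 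Since the right-hand side is independent of $s$, each left-hand term is bounded by it; taking the supremum over $s$ in the first term, letting $s\to t$ in the integral, and adding, one obtains \eqref{eq:cacc2_n} (with $\gamma$ absorbing the constants $q$ and $\gamma_{0}^{-1}$). The estimate \eqref{eq:cacc2_nn} comes out of the very same computation run with $\eta\equiv 1$: the boundary term at $\tau=0$ now survives and, because $\unk$ satisfies \eqref{eq:init}, equals $\tfrac{1}{q}\sum_{x\in V}\ppos{\unk_{0}(x)-h}^{q}\dgw(x)$, which is moved to the right-hand side; a final supremum in time then yields \eqref{eq:cacc2_nn}.

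The algebra above is routine, so the step I expect to require most care is the one that genuinely departs from the continuous (or compactly supported) setting: checking that every sum over $V$ is a priori finite --- above all the $p$-energy term on the left of \eqref{eq:cacc2_n} --- so that the space integration by parts and the exchange of $\sum_{x\in V}$ with $\partial_{\tau}$ are permitted. This, and only this, is where the hypothesis $\unk\in L^{\infty}(0,T;\ell^{q}(V))$ enters: writing $g=\ppos{\unk(\tau)-h}^{(p+q-2)/p}$, the elementary bound $\abs{\dif_{y}g(x)}^{p}\le 2^{p-1}(g(x)^{p}+g(y)^{p})$ and the identity $\sum_{y\in V}\wgt(x,y)=\dgw(x)$ give
\begin{equation*}
  \sum_{x,y\in V}\Abs{\dif_{y}g(x)}^{p}\wgt(x,y)
  \le
  2^{p}\sum_{x\in V}\ppos{\unk(x,\tau)-h}^{p+q-2}\dgw(x)
  \le
  2^{p}\,\norma{\unk(\tau)}{\ell^{\infty}(V)}^{p-2}\,\norma{\ppos{\unk(\tau)-h}}{\ell^{q}(V)}^{q}<\infty
  \,,
\end{equation*}
uniformly in $\tau$; the remaining single and double sums are dominated by weighted $\ell^{q}$ and $\ell^{p+q-2}$ sums of $\unk$ in the same way, so that no truncation to finitely supported functions is actually needed.
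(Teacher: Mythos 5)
Your proof is correct and follows essentially the same route as the paper: the same test function $\eta\,\ppos{\unk-h}^{q-1}$, the same appeal to Lemma~\ref{l:monot} with $\unkii=0$, and the very same H\"older/Remark~\ref{r:lp_scale} estimate --- the only difference being that the paper inserts a spatial cutoff $\zeta_{R_{1},R_{2}}$ and lets $R_{2},R_{1}\to\infty$, using that estimate to make the cutoff error term vanish, whereas you use it to justify the global summation by parts directly; the two devices are interchangeable here. One minor remark: your computation yields $\gamma\sum_{x\in V}\ppos{\unk_{0}(x)-h}^{q}\dgw(x)$ on the right of the second inequality, which is the natural form (and, as $t\to0^{+}$, the only tenable one), so the extra time integral in the stated right-hand side of \eqref{eq:cacc2_nn} should be read as a slip rather than as something your argument fails to reach.
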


\begin{proof}
  Let us prove \eqref{eq:cacc2_n}; the inequality \eqref{eq:cacc2_nn} is proved similarly.
  
  We multiply \eqref{eq:pde} against
  $\zeta(x)\eta(t)\ppos{\unk(x,t)-h}^{q-1}$; on integrating by parts as in
  the proof of Lemma~\ref{l:cacc} we obtain
  \begin{equation}
    \label{eq:cacc2_i}
    \begin{split}
      &\frac{1}{q}
      \sum_{x\in V}
      \zeta(x)
      \ppos{\unk(x,t)-h}^{q}
      \dgw(x)
      \eta(t)
      \\
      &\quad+
      \int_{0}^{t}
      \sum_{x,y\in V}
      \abs{\dif_{y}\unk(x,\tau)}^{p-2}
      \dif_{y}\unk(x,\tau)
      \zeta(y)
      \dif_{y}\ppos{\unk(x,\tau)-h}^{q-1}
      \wgt(x,y)
      \eta(\tau)
      \di\tau
      \\
      &\quad+
      \int_{0}^{t}
      \sum_{x,y\in V}
      \abs{\dif_{y}\unk(x,\tau)}^{p-2}
      \dif_{y}\unk(x,\tau)
      \dif_{y}\zeta(x)
      \ppos{\unk(x,\tau)-h}^{q-1}
      \wgt(x,y)
      \eta(\tau)
      \di\tau
      \\
      &\qquad=
      \frac{1}{q}
      \int_{0}^{t}
      \sum_{x\in V}
      \zeta(x)
      \ppos{\unk(x,\tau)-h}^{q}
      \dgw(x)
      \eta'(\tau)
      \di\tau
      \,.
    \end{split}
  \end{equation}
  We estimate next the second integral in \eqref{eq:cacc2_i}. The
  absolute value of the integrand is bounded from above by
  \begin{multline*}
    \frac{1}{R_{2}-R_{1}}
    \sum_{x,y\in B_{R_{2}+1}}
    \abs{\dif_{y}\unk(x,\tau)}^{p-1}
    \ppos{\unk(x,\tau)-h}^{q-1}
    \wgt(x,y)
    \\
    \le
    \frac{1}{R_{2}-R_{1}}
    \sum_{x,y\in B_{R_{2}+1}}
    \big(
    \unk(x,\tau)^{p+q-2}
    +
    \unk(y,\tau)^{p-1}
    \unk(x,\tau)^{q-1}
    \big)
    \wgt(x,y)
    \le
    \frac{C_{u}}{R_{2}-R_{1}}
    \,,
  \end{multline*}
  where $C_{u}$ is independent of $R_{i}$. Owing to $p+q-2>q$ and to Remark~\ref{r:lp_scale}, to this end it is only left to observe that
  \begin{multline*}
    \sum_{x,y\in B_{R_{2}+1}}
    \unk(y,\tau)^{p-1}
    \unk(x,\tau)^{q-1}
    \wgt(x,y)
    \\
    \le
    \Big(
    \sum_{y\in V}
    \unk(y,\tau)^{(p-1)q}
    \dgw(y)
    \Big)^{\frac{1}{q}}
    \Big(
    \sum_{x\in V}
    \unk(x,\tau)^{q}
    \dgw(x)
    \Big)^{\frac{q-1}{q}}
    \,,
  \end{multline*}
  and to use once more Remark~\ref{r:lp_scale}, since $(p-1)q>q$.

  The sought after estimates follows immediately upon applying
  Lemma~\ref{l:monot} with $\unkii=0$ and then letting first
  $R_{2}\to\infty$ and then $R_{1}\to\infty$. 
\end{proof}

\begin{remark}
  \label{r:prelim_diff}
  Lemma~\ref{l:cacc2} is still in force if $\unk$ is the difference of
  two solutions to \eqref{eq:pde}. The proof is the same, when we
  start from the difference of the two equations and recall
  Lemma~\ref{l:monot}.
\end{remark}

\subsection{Existence and comparison}
\label{s:prelim_ex}

\begin{proposition}
  \label{p:ex}
  Let $\unk_{0}\in\ell^{q}(V)$, $q>1$. Then
  \eqref{eq:pde}--\eqref{eq:init} has a solution in
  $L^{\infty}(0,+\infty;\ell^{q}(V))$. If $\unk_{0}\ge 0$ then $\unk\ge 0$.
\end{proposition}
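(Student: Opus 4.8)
The plan is to obtain $\unk$ as a limit of solutions of finite systems of ordinary differential equations. Fix the exhaustion $V_{n}=B_{n}(x_{0})$ of $V$ by finite sets and, for each $n$, consider the finite ODE system
\[
  \pder{\unk_{n}}{t}(x,t)=\Lapl_{p}\unk_{n}(x,t)\,,\quad x\in V_{n}\,,
\]
complemented by the frozen values $\unk_{n}(x,t)=0$ for $x\notin V_{n}$ (so that $\Lapl_{p}$ is meaningful at the vertices of $V_{n}$ adjacent to the complement) and the initial conditions $\unk_{n}(x,0)=\unk_{0}(x)$, $x\in V_{n}$. Since $p>2$ the map $s\mapsto\abs{s}^{p-2}s$ is $C^{1}$, hence the right-hand side is locally Lipschitz in the finite vector $(\unk_{n}(x))_{x\in V_{n}}$ and the Cauchy--Lipschitz theorem gives a unique maximal solution.

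First I would prove the a priori bound $\norma{\unk_{n}(t)}{\ell^{q}(V)}\le\norma{\unk_{0}}{\ell^{q}(V)}$: multiplying the $n$-th system by $\abs{\unk_{n}(x,t)}^{q-2}\unk_{n}(x,t)\dgw(x)$ — a finitely supported test function, since $\unk_{n}$ vanishes off $V_{n}$ — summing over $V$ and using the integration-by-parts identity recalled in the proof of Lemma~\ref{l:cacc}, one finds that $t\mapsto\norma{\unk_{n}(t)}{\ell^{q}(V)}$ is nonincreasing, the dissipation term being a sum of nonnegative contributions because $r\mapsto\abs{r}^{p-2}r$ and $r\mapsto\abs{r}^{q-2}r$ are both nondecreasing (alternatively one invokes Lemma~\ref{l:monot}). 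In particular every component stays bounded, so the solution is global on $[0,+\infty)$, and by Remark~\ref{r:lp_scale} one has the uniform pointwise bound $\abs{\unk_{n}(x,t)}\le\gamma\norma{\unk_{0}}{\ell^{q}(V)}$ for all $n$, $x$, $t$.

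Next comes the compactness step. For each fixed $x$ the derivative $\partial_{t}\unk_{n}(x,t)=\Lapl_{p}\unk_{n}(x,t)$ is bounded uniformly in $n$ and locally uniformly in $t$, since $G$ is locally finite, $\sum_{y}\wgt(x,y)=\dgw(x)<\infty$, and the $\unk_{n}(y,t)$ are uniformly bounded; hence $\{\unk_{n}(x,\cdot)\}_{n}$ is equibounded and equi-Lipschitz on each $[0,k]$. By the Arzel\`a--Ascoli theorem and a diagonal extraction over the countable set $V$ and over $k\in\N$, a subsequence converges, for every $x\in V$, uniformly on compact time intervals, to some $\unk(x,\cdot)\in C([0,+\infty))$. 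For fixed $x$ and $n$ large enough the finitely many values $\unk_{n}(y,\cdot)$ with $y\sim x$ that enter $\Lapl_{p}\unk_{n}(x,\cdot)$ all converge and $\abs{\cdot}^{p-2}\cdot$ is continuous, so passing to the limit in $\unk_{n}(x,t)=\unk_{0}(x)+\int_{0}^{t}\Lapl_{p}\unk_{n}(x,s)\di s$ yields $\unk(x,t)=\unk_{0}(x)+\int_{0}^{t}\Lapl_{p}\unk(x,s)\di s$; thus $\unk(x,\cdot)\in C^{1}$ and $\unk$ solves \eqref{eq:pde}--\eqref{eq:init} in the pointwise sense. Membership in $L^{\infty}(0,+\infty;\ell^{q}(V))$ follows from the a priori bound: for any finite $U\subset V$ one has $\sum_{x\in U}\abs{\unk(x,t)}^{q}\dgw(x)=\lim_{j}\sum_{x\in U}\abs{\unk_{n_{j}}(x,t)}^{q}\dgw(x)\le\norma{\unk_{0}}{\ell^{q}(V)}^{q}$, and one takes the supremum over $U$.

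Finally, if $\unk_{0}\ge0$ then each $\unk_{n}\ge0$: the function $\phi_{n}(t)=\sum_{x\in V}\pneg{\unk_{n}(x,t)}^{2}\dgw(x)$ satisfies $\phi_{n}(0)=0$ and $\tfrac12\phi_{n}'(t)=\sum_{x\in V}\min(\unk_{n}(x,t),0)\,\Lapl_{p}\unk_{n}(x,t)\dgw(x)$, which after integration by parts is $\le0$ because $r\mapsto\min(r,0)$ is nondecreasing; hence $\phi_{n}\equiv0$, so $\unk_{n}\ge0$ and, in the limit, $\unk\ge0$ (alternatively, this is immediate from the comparison principle of Proposition~\ref{p:compare}). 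The only genuinely delicate point I anticipate is combining the uniform-in-$n$ equicontinuity estimate with the verification that the limit solves the equation at \emph{every} vertex, i.e.\ checking that the truncation's boundary effects near $\bdr{V_{n}}$ do no harm; the rest is routine.
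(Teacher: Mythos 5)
Your proposal is correct and follows essentially the same route as the paper: truncation to the finite ODE systems on $B_{n}$ with zero values outside, the $\ell^{q}$ energy estimate obtained by testing with $\abs{\unk_{n}}^{q-2}\unk_{n}$, uniform pointwise bounds via Remark~\ref{r:lp_scale} giving bounds on $\partial_{t}\unk_{n}$, diagonal compactness over the countable vertex set, and passage to the limit. The only (harmless) deviation is your direct $\pneg{\unk_{n}}$ argument for nonnegativity, where the paper simply invokes the comparison principle of Proposition~\ref{p:compare}, an alternative you also note.
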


\begin{proof}
  Let $\unk_{0}\in\ell^{q}(V)$, $q>1$. Define for $n\ge 1$ $\unk_{n}$ as the solution to
  \begin{alignat}{2}
    \label{eq:exist_pde_n}
    \pder{\unk_{n}}{t}(x,t)
    &=
    \Lapl_{p}\unk_{n}(x,t)
    \,,
    &\qquad&
    x\in B_{n}
    \,,
    t>0
    \,,
    \\
    \label{eq:exist_init_n}
    \unk_{n}(x,0)
    &=
    \unk_{0}(x)
    \,,
    &\qquad&
    x\in B_{n}
    \,,
    \\
    \label{eq:exist_dir_n}
    \unk_{n}(x,t)
    &=
    0
    \,,
    &\qquad&
    x\not\in B_{n}
    \,,
    t\ge 0
    \,.
  \end{alignat}
  In practice this is a finite system of ordinary differential
  equations, uniquely solvable in the class $C^{1}(0,T)$ at least as long as the solution stays
  bounded over $(0,T)$.

  In this connection, we rewrite \eqref{eq:exist_pde_n}, \eqref{eq:exist_dir_n} as
  \begin{equation*}
    \unk_{n}(x,t)^{q-1}
    \pder{\unk_{n}}{t}(x,t)
    =
    \unk_{n}(x,t)^{q-1}
    \Lapl_{p}\unk_{n}(x,t)
    \,,
    \qquad
    x\in V
    \,,
    t>0
    \,,
  \end{equation*}
  where we stress that the equality holds for all $x\in V$. In this
  Subsection we denote $s^{q-1}=\abs{s}^{q-1}\textup{sign}(s)$ for all
  $s\in \R$. Thus, summing over $x\in V$ and integrating by parts both
  in $t$ and in $x$ (in the suitable sense) we see that the elliptic
  part of the equation yields a nonnegative contribution, so that
  \begin{equation}
    \label{eq:exist_energy_n}
    \sum_{x\in V}
    \abs{\unk_{n}(x,t)}^{q}
    \dgw(x)
    \le
    \sum_{x\in B_{n}}
    \abs{\unk_{0}(x)}^{q}
    \dgw(x)
    \le
    \norma{\unk_{0}}{\ell^{q}(V)}^{q}
    \,.
  \end{equation}
  In turn, as explained in Remark~\ref{r:lp_scale}, this implies stable
  sup bounds for $\unk_{n}$ which, together with the discrete character
  of the $p$-laplacian and with the equation \eqref{eq:exist_pde_n},
  also give stable sup bounds for the time derivative
  $\partial \unk_{n}/\partial t$, for each fixed $x$. However $V$ is
  countable, so that this is enough to enable us to extract a
  subsequence, still denoted by $\unk_{n}$ such that
  \begin{equation}
    \label{eq:exist_unif_conv}
    \unk_{n}(x,t)
    \to
    \unk(x,t)
    \,,
    \quad
    \pder{\unk_{n}}{t}(x,t)
    \to
    \pder{\unk}{t}(x,t)
  \end{equation}
  for each $x\in V$, uniformly for $t\in [0,T]$, where we have made use
  of the equation again to obtain convergence for the time
  derivative. Finally owing to \eqref{eq:exist_energy_n} we have
  \begin{equation}
    \label{eq:exist_energy_nj}
    \sum_{x\in V}
    \abs{\unk(x,t)}^{q}
    \dgw(x)
    \le
    \norma{\unk_{0}}{\ell^{q}(V)}^{q}
    \,,
    \qquad
    t>0
    \,.
  \end{equation}
  It is easily seen that $\unk\in L^{\infty}(0,+\infty;\ell^{q}(V))$
  is a solution to \eqref{eq:pde}--\eqref{eq:init}. If
  $\unk_{0}\ge 0$, we appeal to our next result to prove that
  $\unk\ge 0$.
\end{proof}

\begin{proposition}[Comparison]
  \label{p:compare}
  If $\unk_{1}$, $\unk_{2}\in L^{\infty}(0,T; \ell^{q}(V))$ solve
  \eqref{eq:pde}--\eqref{eq:init} with $\unk_{01}$, $\unk_{02}\in\ell^{q}(V)$, $\unk_{01}\ge \unk_{02}$, then
  $\unk_{1}\ge\unk_{2}$.
\end{proposition}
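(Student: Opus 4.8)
The plan is to reduce comparison to the Caccioppoli inequality of Lemma~\ref{l:cacc2}, used in the form valid for a difference of two solutions supplied by Remark~\ref{r:prelim_diff}; throughout we take $q>1$ (the case $q=1$ is treated in Theorem~\ref{t:l1}). Put $\unkiii:=\unk_{2}-\unk_{1}$. Since $\unk_{1},\unk_{2}\in L^{\infty}(0,T;\ell^{q}(V))$, also $\unkiii\in L^{\infty}(0,T;\ell^{q}(V))$; moreover $\unkiii$ is a difference of solutions of \eqref{eq:pde}, and by \eqref{eq:init} it satisfies $\unkiii(\cdot,0)=\unk_{02}-\unk_{01}$. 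Hence inequality \eqref{eq:cacc2_nn}, in the version of Remark~\ref{r:prelim_diff}, is available for $\unkiii$.

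I would then apply \eqref{eq:cacc2_nn} with $h=0$. Because $\unk_{01}\ge\unk_{02}$, we have $\ppos{\unk_{02}(x)-\unk_{01}(x)}=0$ for every $x\in V$, so the right hand side of \eqref{eq:cacc2_nn} vanishes identically; therefore, for each $0<t<T$,
\begin{equation*}
  \sup_{0<\tau<t}
  \sum_{x\in V}
  \ppos{\unkiii(x,\tau)}^{q}
  \dgw(x)
  =0
  \,.
\end{equation*}
As $\dgw(x)>0$ for every $x\in V$ (see \eqref{eq:rem_fk_bound}, or simply Remark~\ref{r:lp_scale}), this forces $\ppos{\unkiii(x,\tau)}=0$, i.e.\ $\unk_{2}(x,\tau)\le\unk_{1}(x,\tau)$, for all $x\in V$ and $0<\tau<t$. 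Since $t<T$ is arbitrary and $\unk_{1}(\cdot,0)=\unk_{01}\ge\unk_{02}=\unk_{2}(\cdot,0)$, we conclude $\unk_{1}\ge\unk_{2}$.

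I do not expect a genuine obstacle: the only point that must be written out is the validity of \eqref{eq:cacc2_nn} for the difference $\unkiii$, which is exactly the content of Remark~\ref{r:prelim_diff}. Concretely, one multiplies the subtracted equations $\pder{\unkiii}{t}=\Lapl_{p}\unk_{2}-\Lapl_{p}\unk_{1}$ by $\zeta(x)\eta(t)\ppos{\unkiii(x,t)-h}^{q-1}$, integrates by parts as in the proof of Lemma~\ref{l:cacc}, and applies Lemma~\ref{l:monot} with $\unk=\unk_{2}$ and $\unkii=\unk_{1}$ (rather than with $\unkii=0$), so that the elliptic contribution has the correct sign; the absolute convergence of the infinite sums and the successive limits $R_{2}\to\infty$, $R_{1}\to\infty$ go through verbatim as in the proof of Lemma~\ref{l:cacc2}, via Remark~\ref{r:lp_scale}. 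The mechanism behind this is the energy identity obtained from the same summation by parts (now with no cutoff in time),
\begin{multline*}
  \frac{1}{q}
  \sum_{x\in V}
  \ppos{\unkiii(x,t)}^{q}
  \dgw(x)
  +
  \frac{1}{2}
  \int_{0}^{t}
  \sum_{x,y\in V}
  \big(
  \abs{\dif_{y}\unk_{2}(x,\tau)}^{p-2}\dif_{y}\unk_{2}(x,\tau)
  -
  \abs{\dif_{y}\unk_{1}(x,\tau)}^{p-2}\dif_{y}\unk_{1}(x,\tau)
  \big)
  \dif_{y}\ppos{\unkiii(x,\tau)}^{q-1}
  \wgt(x,y)
  \di\tau
  \\
  =
  \frac{1}{q}
  \sum_{x\in V}
  \ppos{\unk_{02}(x)-\unk_{01}(x)}^{q}
  \dgw(x)
  \,,
\end{multline*}
whose second term on the left is nonnegative by Lemma~\ref{l:monot}; since the right hand side is $0$, this again gives $\ppos{\unkiii}\equiv0$. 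In either route the only mildly delicate step is the summation by parts for the infinite sums, which is dispatched exactly as in the proof of Lemma~\ref{l:cacc2}.
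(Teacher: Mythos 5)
Your proposal is correct and follows essentially the same route as the paper: set $\unkiii=\unk_{2}-\unk_{1}$, apply \eqref{eq:cacc2_nn} with $h=0$ to this difference via Remark~\ref{r:prelim_diff} (Lemma~\ref{l:monot} with $\unkii=\unk_{1}$ giving the sign of the elliptic term), and conclude $\ppos{\unkiii}\equiv 0$ since the right hand side vanishes. The only cosmetic difference is the reduction to $q>1$, which the paper justifies by the embedding $\ell^{1}(V)\subset\ell^{q}(V)$ of Remark~\ref{r:lp_scale} rather than by deferring to Theorem~\ref{t:l1}.
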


\begin{proof}
  According to Remark~\ref{r:lp_scale} and to Definition~\ref{d:sol},
  we may assume $q>1$.  Define $\unkiii=\unk_{2}-\unk_{1}$. Then
  $\unkiii$ does not solve \eqref{eq:pde}, but we may still apply
  \eqref{eq:cacc2_nn} (with $h=0$) to it, see
  Remark~\ref{r:prelim_diff}. This proves $\ppos{\unkiii}=0$ and thus
  the statement.
\end{proof}

\subsection{Elementary inequalities}
\label{s:prelim_elem}
We record for future use two immediate consequences of \eqref{eq:fkf_nd}, \eqref{eq:fkf_ni}:
\begin{alignat}2
  \label{eq:fkf_above}
  \fkf(sa)^{-1}
  &\le
  s^{\omega}
  \fkf(a)^{-1}
  \,,
  &\qquad&
  s\ge 1
  \,,
  a>0
  \,;
  \\
  \label{eq:fkf_below}
  \fkf(\sigma a)^{-1}
  &\le
  \sigma^{\frac{p}{\SpDim}}
  \fkf(a)^{-1}
  \,,
  &\qquad&
  0<\sigma\le 1
  \,,
  a>0
  \,.
\end{alignat}
Also the following Lemma relies on \eqref{eq:fkf_nd} and will be used in a
context where it is important that $\nu<1/(p-1)$.
\begin{lemma}
  \label{l:dcf}
  Let $\nu=\SpDim(p-2)/[(\SpDim(p-2)+p)(p-1)]$ and $b>0$.  Then the function
  \begin{equation*}
    \tau\mapsto \tau^{\nu}\dcf_{1}^{(-1)}(\tau^{-1}b)^{\frac{p-2}{p-1}}
    \,,
    \qquad
    \tau>0
    \,,
  \end{equation*}
  is nondecreasing.
\end{lemma}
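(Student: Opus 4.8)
The plan is to reduce the monotonicity statement to a differentiation, using the fact that $\dcf_{1}^{(-1)}$ is increasing and expressing its derivative through the derivative of $\dcf_1$. Recall from \eqref{eq:dcf_def} that $\dcf_1(s)=s^{p-2}\fkf(s^{-1})$; since $\fkf$ is decreasing and $v\mapsto\fkf(v)^{-1}v^{-p/\SpDim}$ is nondecreasing by \eqref{eq:fkf_nd}, the function $s\mapsto s^{p-2}\fkf(s^{-1}) = s^{p-2}\cdot\big(\fkf(s^{-1})^{-1}\big)^{-1}$ can be compared with a power of $s$. Specifically, writing $v=s^{-1}$, assumption \eqref{eq:fkf_nd} says $\fkf(v)^{-1}v^{-p/\SpDim}$ is nondecreasing in $v$, i.e. $s\mapsto \fkf(s^{-1})^{-1} s^{p/\SpDim}$ is nonincreasing, so $s\mapsto \fkf(s^{-1}) s^{-p/\SpDim}$ is nondecreasing. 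Hence $\dcf_1(s) = s^{p-2}\fkf(s^{-1}) = s^{p-2+p/\SpDim}\cdot\big(\fkf(s^{-1})s^{-p/\SpDim}\big)$, so $s\mapsto \dcf_1(s)\, s^{-(p-2+p/\SpDim)}$ is nondecreasing. Setting $\beta=p-2+p/\SpDim = (\SpDim(p-2)+p)/\SpDim$, this gives the crucial scaling inequality: for $0<\sigma\le 1$ and $s>0$,
\begin{equation*}
  \dcf_1(\sigma s) \le \sigma^{\beta}\dcf_1(s)\,,
\end{equation*}
equivalently, passing to inverses (both sides increasing in their argument), $\dcf_1^{(-1)}(\lambda a)\ge \lambda^{1/\beta}\dcf_1^{(-1)}(a)$ for $\lambda\ge 1$, $a>0$, and dually $\dcf_1^{(-1)}(\mu a)\le \mu^{1/\beta}\dcf_1^{(-1)}(a)$ for $0<\mu\le 1$.

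Next I would use this to compare the function in the statement at two times $0<\tau_1<\tau_2$. Write $g(\tau)=\tau^{\nu}\dcf_1^{(-1)}(\tau^{-1}b)^{(p-2)/(p-1)}$. Then
\begin{equation*}
  \frac{g(\tau_2)}{g(\tau_1)}
  =
  \Big(\frac{\tau_2}{\tau_1}\Big)^{\nu}
  \bigg(\frac{\dcf_1^{(-1)}(\tau_2^{-1}b)}{\dcf_1^{(-1)}(\tau_1^{-1}b)}\bigg)^{\frac{p-2}{p-1}}\,,
\end{equation*}
and since $\tau_2^{-1}b = (\tau_1/\tau_2)\,\tau_1^{-1}b$ with $\tau_1/\tau_2\in(0,1)$, the dual inequality above with $\mu=\tau_1/\tau_2$, $a=\tau_1^{-1}b$ does \emph{not} immediately help because it bounds the ratio from above, which is the wrong direction. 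Instead I would apply the first inverse inequality with $\lambda=\tau_2/\tau_1\ge 1$, $a=\tau_2^{-1}b$, obtaining $\dcf_1^{(-1)}(\tau_1^{-1}b)=\dcf_1^{(-1)}(\lambda\,\tau_2^{-1}b)\ge \lambda^{1/\beta}\dcf_1^{(-1)}(\tau_2^{-1}b)$, hence
\begin{equation*}
  \frac{\dcf_1^{(-1)}(\tau_2^{-1}b)}{\dcf_1^{(-1)}(\tau_1^{-1}b)}
  \ge
  \Big(\frac{\tau_1}{\tau_2}\Big)^{1/\beta}\cdot\frac{\dcf_1^{(-1)}(\tau_2^{-1}b)}{\dcf_1^{(-1)}(\tau_2^{-1}b)}\,,
\end{equation*}
which is again an inequality in the unhelpful direction. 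The correct route is: $\dcf_1^{(-1)}$ is \emph{increasing}, so $\dcf_1^{(-1)}(\tau_2^{-1}b)\le \dcf_1^{(-1)}(\tau_1^{-1}b)$, and quantitatively the decrease is controlled by $\lambda^{1/\beta}$, i.e. $\dcf_1^{(-1)}(\tau_2^{-1}b)\ge \lambda^{-1/\beta}\dcf_1^{(-1)}(\tau_1^{-1}b)$ follows from applying the inverse inequality with $\lambda=\tau_2/\tau_1$ and $a=\tau_2^{-1}b$. Therefore
\begin{equation*}
  \frac{g(\tau_2)}{g(\tau_1)}
  \ge
  \Big(\frac{\tau_2}{\tau_1}\Big)^{\nu}
  \Big(\frac{\tau_1}{\tau_2}\Big)^{\frac{p-2}{\beta(p-1)}}
  =
  \Big(\frac{\tau_2}{\tau_1}\Big)^{\nu-\frac{p-2}{\beta(p-1)}}\,.
\end{equation*}

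Finally I would verify that the exponent vanishes: with $\beta=(\SpDim(p-2)+p)/\SpDim$,
\begin{equation*}
  \frac{p-2}{\beta(p-1)}
  =
  \frac{\SpDim(p-2)}{(\SpDim(p-2)+p)(p-1)}
  =
  \nu\,,
\end{equation*}
so $g(\tau_2)/g(\tau_1)\ge 1$, proving $g$ is nondecreasing. The main (minor) obstacle is bookkeeping the direction of the scaling inequality for $\dcf_1^{(-1)}$ correctly — one must be careful that $\dcf_1^{(-1)}$ is increasing and that the relevant comparison is at arguments differing by a factor $\ge 1$; once the scaling estimate $\dcf_1^{(-1)}(\lambda a)\ge\lambda^{1/\beta}\dcf_1^{(-1)}(a)$ for $\lambda\ge1$ is correctly extracted from \eqref{eq:fkf_nd}, the rest is an exponent check. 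Note that the role of the hypothesis $\nu<1/(p-1)$, mentioned in the surrounding text, is not needed for this monotonicity statement itself but only for the later application.
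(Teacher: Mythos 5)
Your overall strategy is sound and genuinely different from the paper's: you prove the quasi-homogeneity bound $\dcf_{1}(\sigma s)\le\sigma^{\beta}\dcf_{1}(s)$ for $0<\sigma\le1$, $\beta=(\SpDim(p-2)+p)/\SpDim$ (this derivation from \eqref{eq:fkf_nd} is correct, and is essentially \eqref{eq:fkf_below} in disguise), transfer it to $\dcf_{1}^{(-1)}$, and then compare $g(\tau_{1})$ with $g(\tau_{2})$ and check that the exponent $\nu-\tfrac{p-2}{\beta(p-1)}$ vanishes, which it does. The paper instead substitutes $s=\dcf_{1}^{(-1)}(r)$ and observes that $r^{-\nu(p-1)}\dcf_{1}^{(-1)}(r)^{p-2}=[s^{-p/\SpDim}\fkf(s^{-1})]^{-\nu(p-1)}$, so the monotonicity is read off directly from \eqref{eq:fkf_nd} with no two-point comparison; that route is shorter but yours is perfectly legitimate.

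There is, however, a directional error in your ``passing to inverses'' step. From $\dcf_{1}(\sigma s)\le\sigma^{\beta}\dcf_{1}(s)$ for $\sigma\le1$ the correct consequences are
\begin{equation*}
  \dcf_{1}^{(-1)}(\lambda a)\le\lambda^{1/\beta}\dcf_{1}^{(-1)}(a)\,,\quad\lambda\ge1\,;\qquad
  \dcf_{1}^{(-1)}(\mu a)\ge\mu^{1/\beta}\dcf_{1}^{(-1)}(a)\,,\quad 0<\mu\le1\,,
\end{equation*}
i.e.\ exactly the opposite of the two inequalities you state (indeed, applying $\dcf_{1}^{(-1)}$ to $\dcf_{1}(\sigma\dcf_{1}^{(-1)}(a))\le\sigma^{\beta}a$ gives $\sigma\dcf_{1}^{(-1)}(a)\le\dcf_{1}^{(-1)}(\sigma^{\beta}a)$). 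This is why your first two attempts in the comparison of $g(\tau_{2})/g(\tau_{1})$ appear to go ``in the unhelpful direction'': you were manipulating the false versions. The inequality you actually invoke in the final display, namely $\dcf_{1}^{(-1)}(\tau_{2}^{-1}b)\ge(\tau_{1}/\tau_{2})^{1/\beta}\dcf_{1}^{(-1)}(\tau_{1}^{-1}b)$, is the \emph{true} one (the $\mu\le1$ case above with $\mu=\tau_{1}/\tau_{2}$, $a=\tau_{1}^{-1}b$), so the proof closes and the exponent check is right; but as written it does not follow from your stated ``inverse inequality'' with $\lambda=\tau_{2}/\tau_{1}$, which would give the reverse bound. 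Fix the two displayed inverse inequalities and delete the two false starts, and the argument is complete.
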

\begin{proof}
  Equivalently we show that
  \begin{equation*}
    r\mapsto r^{-\alpha}\dcf_{1}^{(-1)}(r)^{p-2}
  \end{equation*}
  is nonincreasing for $\alpha=\nu(p-1)$. Set $s=\dcf_{1}^{(-1)}(r)$, so that by definition of $\dcf_{1}$
  \begin{equation*}
    r^{-\alpha}\dcf_{1}^{(-1)}(r)^{p-2}
    =
    s^{(1-\alpha)(p-2)}\fkf(s^{-1})^{-\alpha}
    =
    [s^{-\frac{p}{\SpDim}}\fkf(s^{-1})]^{-\alpha}
    \,.
  \end{equation*}
  By assumption \eqref{eq:fkf_nd} the latter quantity is indeed
  nonincreasing in $s$ which however is a nondecreasing function of
  $r$.
\end{proof}

\begin{lemma}
  \label{l:bbl}
  Under assumption \eqref{eq:fkf_bbl} we have that if $R$, $s>0$, $c\ge 1$ and
  \begin{equation}
    \label{eq:bbl_m}
    R^{p}
    =
    cs
    \dcf_{1}^{(-1)}(s^{-1})^{p-2}
    \,,
  \end{equation}
  then
  \begin{equation}
    \label{eq:bbl_mm}
    \msw(B_{\floor{R}})
    \le
    \gamma(c)
    \psi_{1}^{(-1)}
    (s^{-1})^{-1}
    \,.
  \end{equation}
\end{lemma}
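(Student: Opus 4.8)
The plan is to eliminate $s$ from \eqref{eq:bbl_m} in favour of the Faber--Krahn function, feed the resulting identity into the geometric hypothesis \eqref{eq:fkf_bbl}, and then turn an inequality between two values of $\fkf^{-1}$ into the desired inequality between their arguments by means of the monotonicity \eqref{eq:fkf_nd}. Throughout let $c_{0}$ denote the constant of \eqref{eq:fkf_bbl} (which need not coincide with the $c$ appearing in \eqref{eq:bbl_m}).

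First I would set $a=\dcf_{1}^{(-1)}(s^{-1})$. By \eqref{eq:dcf_def} with $r=1$ this means $s^{-1}=\dcf_{1}(a)=a^{p-2}\fkf(a^{-1})$, hence $s=a^{-(p-2)}\fkf(a^{-1})^{-1}$, and inserting this into \eqref{eq:bbl_m} gives the clean identity
\begin{equation*}
  R^{p}=c\,s\,\dcf_{1}^{(-1)}(s^{-1})^{p-2}=c\,s\,a^{p-2}=c\,\fkf(a^{-1})^{-1}.
\end{equation*}
Now put $v=\msw(B_{\floor R})$. Since $G$ is infinite and connected there is a vertex at every integer distance from $x_{0}$, and $\dgw\ge\fkf^{(-1)}(2)>0$ by Remark~\ref{r:fk_below}, so $n\mapsto\msw(B_{n})$ is strictly increasing and therefore $\rdf(v)=\floor R$ (for this one needs $\floor R\ge1$, automatic in the applications, e.g.\ when $R\ge8$ as in Theorem~\ref{p:bbl}). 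Hence \eqref{eq:fkf_bbl}, $\floor R\le R$, and the identity above yield
\begin{equation*}
  \fkf(v)^{-1}\le c_{0}^{-1}\rdf(v)^{p}=c_{0}^{-1}\floor R^{\,p}\le c_{0}^{-1}R^{p}=\frac{c}{c_{0}}\,\fkf(a^{-1})^{-1}.
\end{equation*}

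It then remains to pass from $\fkf(v)^{-1}\le(c/c_{0})\,\fkf(a^{-1})^{-1}$ to $v\le\gamma(c)\,a^{-1}$. If $v\le a^{-1}$ this is immediate; if $v>a^{-1}$, then since $w\mapsto\fkf(w)^{-1}w^{-p/\SpDim}$ is nondecreasing by \eqref{eq:fkf_nd} we get $\fkf(v)^{-1}\ge\fkf(a^{-1})^{-1}(va)^{p/\SpDim}$, so that $(va)^{p/\SpDim}\le c/c_{0}$ and $v\le(c/c_{0})^{\SpDim/p}a^{-1}$. Taking $\gamma(c)=\max\{1,(c/c_{0})^{\SpDim/p}\}$ and recalling $a^{-1}=\dcf_{1}^{(-1)}(s^{-1})^{-1}$ establishes \eqref{eq:bbl_mm}.

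I do not anticipate a genuine obstacle: the argument is essentially a change of variables followed by one application of \eqref{eq:fkf_nd}. The only mildly delicate points are the book-keeping leading to $R^{p}=c\,\fkf(a^{-1})^{-1}$, and the legitimacy of writing $\rdf(\msw(B_{\floor R}))=\floor R$, which rests on the strict monotonicity of $n\mapsto\msw(B_{n})$ recorded above together with $\floor R\ge1$; a slightly different convention for $\rdf$ would at worst cost a harmless factor $2^{p}$ absorbed into $\gamma(c)$.
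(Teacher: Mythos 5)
Your argument is correct and follows essentially the same route as the paper's: rewrite \eqref{eq:bbl_m} as $R^{p}=c\,\fkf(a^{-1})^{-1}$ with $a=\dcf_{1}^{(-1)}(s^{-1})$, bound $\rdf(v)^{p}\le R^{p}$ for $v=\msw(B_{\floor R})$, apply \eqref{eq:fkf_bbl}, and absorb the constant via the doubling property of $\fkf$ (the paper invokes \eqref{eq:fkf_below} plus monotonicity of $\fkf$, which is exactly your direct use of \eqref{eq:fkf_nd}). Your explicit justification that $\rdf(v)=\floor R$ is a point the paper leaves implicit, but it is the same proof.
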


\begin{proof}
  Let $\tau>0$ be such that $s^{-1}=\dcf_{1}(\tau)=\tau^{p-2}\fkf(\tau^{-1})$. Then
  \begin{equation*}
    c^{-1}R^{p}
    =
    \fkf(\tau^{-1})^{-1}
    \,.
  \end{equation*}
  On the other hand, on setting $v=\msw(B_{\floor{R}})$ and invoking \eqref{eq:fkf_bbl} we get
  \begin{equation*}
    c^{-1}R^{p}
    \ge
    c^{-1}\rdf(v)^{p}
    \ge
    c^{-1}
    \gamma_{0}
    \fkf(v)^{-1}
    \ge
    \fkf((\gamma_{0}c^{-1})^{\frac{\SpDim}{p}}v)^{-1}
    \,,
  \end{equation*}
  where we also used \eqref{eq:fkf_below}. Since $\fkf$ is nonincreasing, the result follows.
\end{proof}

\section{Proofs of Proposition~\ref{p:linf_meas} and Corollary~\ref{co:linf_int}}
\label{s:linf}

\begin{proof}[Proof of Proposition~\ref{p:linf_meas}]
  By assumption, and by Remark~\ref{r:lp_scale},
  $\unk\in L^{\infty}(0,T;\ell^{q}(V))$ for some $q> 1$; then for all
  $k>0$ the cut function $\ppos{\unk(t)-k}$ is finitely supported. For
  given $0<\sigma_{1}<\sigma_{2}<1/2$, $k>0$, $0<t<T$ define the
  decreasing sequences
  \begin{gather*}
    k_{i}
    =
    k[
    1-\sigma_{2}
    +
    2^{-i}
    (\sigma_{2}-\sigma_{1})
    ]
    \,,
    \qquad
    i=0\,,1\,,2\,,\dots
    \\
    t_{i}
    =
    \frac{t}{2}
    [
    1-\sigma_{2}
    +
    2^{-i}
    (\sigma_{2}-\sigma_{1})
    ]
    \,,
    \qquad
    i=0\,,1\,,2\,,\dots
  \end{gather*}
  and let $f_{i}(x,\tau)=\ppos{\unk(x,\tau)-k_{i}}^{\nu}$, where $\nu=(p+q-2)/p$. Let also 
  \begin{align*}
    \mcut_{i}(\tau)
    &=
      \msw(
      \{
      x\in V
      \mid
      \unk(x,\tau)>k_{i}
      \}
      )
      \,,
      \quad
      \Mcut_{i}
      =
      \sup_{t_{i}<\tau<t}
      \mcut_{i}(\tau)
      \,,
    \\
    \gnor_{i}(\tau)
    &=
      \sum_{x,y\in V}
      \abs{\dif_{y}f_{i}(x,\tau)}^{p}
      \wgt(x,y)
      \,.
  \end{align*}
  Since $b:=q/\nu<p$, it follows from Faber-Krahn inequality
  \eqref{eq:fk} and H\"older's and Young's inequalities that
  \begin{multline}
    \label{eq:fkb}
    \sum_{x\in V}
    f_{i+1}(x,\tau)^{b}
    \dgw(x)
    \le
    \mcut_{i+1}(\tau)^{1-\frac{b}{p}}
    \fkf(\mcut_{i+1}(\tau))^{-\frac{b}{p}}
    \gnor_{i+1}(\tau)^{\frac{b}{p}}
    \\
    \le
    \eps^{\frac{p}{b}}
    \gnor_{i+1}(\tau)
    +
    \eps^{-\frac{p}{p-b}}
    \fkf(\mcut_{i+1}(\tau))^{-\frac{b}{p-b}}
    \mcut_{i+1}(\tau)
    \,.
  \end{multline}
  Here $\eps>0$ is arbitrary and will be selected below. 

  We integrate \eqref{eq:fkb} over $(t_{i+1},t)$ to find
  \begin{equation}
    \label{eq:linf_i}
    \begin{split}
      &\int_{t_{i+1}}^{t}
      \sum_{x\in V}
      f_{i+1}(x,\tau)^{b}
      \dgw(x)
      \di\tau
      \le
      \eps^{\frac{p}{b}}
      \int_{t_{i+1}}^{t}
      \gnor_{i+1}(\tau)
      \di\tau
      \\
      &\qquad+
      \eps^{-\frac{p}{p-b}}
      \int_{t_{i+1}}^{t}
      \fkf(\mcut_{i+1}(\tau))^{-\frac{b}{p-b}}
      \mcut_{i+1}(\tau)
      \di\tau
      \\
      &\quad\le
      \eps^{\frac{p}{b}}
      \int_{t_{i+1}}^{t}
      \gnor_{i+1}(\tau)
      \di\tau
      +
      \eps^{-\frac{p}{p-b}}
      t
      \fkf(\Mcut_{i+1})^{-\frac{b}{p-b}}
      \Mcut_{i+1}
      \,.
    \end{split}
  \end{equation}
  Next we invoke Lemma~\ref{l:cacc2} with $\tau_{1}=t_{i}$, $\tau_{2}=t_{i+1}$, $h=k_{i}$, to infer
  \begin{equation}
    \label{eq:linf_ii}
    \begin{split}
      &
      L_{i}:=
      \sup_{t_{i}<\tau<t}
      \sum_{x\in V}
      f_{i}(x,\tau)^{b}
      \dgw(x)
      +
      \int_{t_{i}}^{t}
      \gnor_{i}(\tau)
      \di\tau
      \\
      &\quad
      \le
      \frac{\gamma 2^{i}}{t(\sigma_{2}-\sigma_{1})}
      \int_{t_{i+1}}^{t}
      \sum_{x\in V}
      f_{i+1}(x,\tau)^{b}
      \dgw(x)
      \di\tau
      \\
      &\quad\le
      \frac{\gamma 2^{i}}{t(\sigma_{2}-\sigma_{1})}
      \eps^{\frac{p}{b}}
      \int_{t_{i+1}}^{t}
      \gnor_{i+1}(\tau)
      \di\tau
      \\
      &\qquad
      +
      \frac{\gamma 2^{i}}{\sigma_{2}-\sigma_{1}}
      \eps^{-\frac{p}{p-b}}
      \fkf(\Mcut_{i+1})^{-\frac{b}{p-b}}
      \Mcut_{i+1}
      \,,
    \end{split}
  \end{equation}
  where the second inequality follows of course from \eqref{eq:linf_i}. For a $\delta>0$ to be chosen, select ($\gamma$ denotes here the constant in \eqref{eq:linf_ii})
  \begin{equation*}
    \frac{\gamma 2^{i}}{t(\sigma_{2}-\sigma_{1})}
    \eps^{\frac{p}{b}}
    =
    \delta
    \qquad
    \text{i.e.,}
    \qquad
    \eps
    =
    \gamma_{0}
    \delta^{\frac{b}{p}}
    t^{\frac{b}{p}}
    (\sigma_{2}-\sigma_{1})^{\frac{b}{p}}
    2^{-\frac{b}{p}i}
    \,.
  \end{equation*}
  On substituting this choice of $\eps$ in \eqref{eq:linf_ii} we arrive
  at an estimate which can be successfully iterated, that is
  \begin{equation}
    \label{eq:linf_iii}
    L_{i}
    \le
    \delta L_{i+1}
    +
    \frac{\gamma 2^{\frac{pi}{p-b}}}{(\sigma_{2}-\sigma_{1})^{\frac{p}{p-b}}}
    \delta^{-\frac{b}{p-b}}
    t^{-\frac{b}{p-b}}
    \fkf(\Mcut_{\infty})^{-\frac{b}{p-b}}
    \Mcut_{\infty}
    \,.
  \end{equation}
  Here we set
  \begin{gather}
    \label{eq:linf_ij}
    t_{\infty}
    =
    \lim_{i\to\infty}
    t_{i}
    =
    \frac{t}{2}(1-\sigma_{2})
    \,,
    \qquad
    k_{\infty}
    =
    \lim_{i\to\infty} k_{i}
    =
    k(1-\sigma_{2})
    \,,
    \\
    \label{eq:linf_ijj}
    M_{\infty}
    =
    \sup_{t_{\infty}<\tau<t}
    \msw(
    \{
    x\in V
    \mid
    \unk(x,\tau)>k_{\infty}
    \}
    )
    \,.
  \end{gather}
  On iterating \eqref{eq:linf_iii} we infer
  \begin{equation*}
    L_{0}
    \le
    \delta^{j}
    L_{j}
    +
    \Big(
    \sum_{i=0}^{j}
    \delta^{i}
    2^{\frac{pi}{p-b}}
    \Big)
    \frac{\gamma}{(\sigma_{2}-\sigma_{1})^{\frac{p}{p-b}}}
    t^{-\frac{b}{p-b}}
    \fkf(\Mcut_{\infty})^{-\frac{b}{p-b}}
    \Mcut_{\infty}
    \,,
  \end{equation*}
  which yields as $j\to\infty$, provided we select  $\delta<2^{-p/(p-b)}$,
  \begin{equation}
    \label{eq:linf_j}
    \begin{split}
      &
      \sup_{t(1-\sigma_{1})/2<\tau<t}
      \sum_{x\in V}
      \ppos{\unk(x,\tau)-k(1-\sigma_{1})}^{q}
      \dgw(x)
      \le L_{0}
      \\
      % &\qquad
      % +
      % \int_{t(1-\sigma_{1})/2}^{t}
      % \Big(
      % \sum_{x,y\in V}
      % \Abs{
      % \dif_{y}
      % \ppos{\unk(x,\tau)-k(1-\sigma_{1})}^{\frac{p+q-2}{p}}
      % }^{p}
      %   \wgt(x,y)
      %   \Big)
      %   \di\tau
      %   \\
      &\quad
      \le
      \frac{\gamma}{(\sigma_{2}-\sigma_{1})^{\frac{q}{p-2}}}
      t^{-\frac{q}{p-2}}
      \fkf(\Mcut_{\infty})^{-\frac{q}{p-2}}
      \Mcut_{\infty}
      \,,
    \end{split}
  \end{equation}
  for $\Mcut_{\infty}$ as in \eqref{eq:linf_ijj}, owing also to $b/(p-b)=q/(p-2)$.

  The proof will be concluded by a second process of iteration, built on \eqref{eq:linf_j}.
  Let $1/2>\sigma>0$ and $k>0$, and define the increasing sequences
  \begin{gather*}
    \tau_{n}
    =
    \frac{t}{2}(1-\sigma 2^{-n})
    \,,
    \qquad
    h_{n}
    =
    k(1-\sigma 2^{-n})
    \,,
    \\
    \bar h_{n}
    =
    \frac{h_{n}+h_{n+1}}{2}
    =
    k(1-3\sigma 2^{-n-2})
    \,,
    \qquad
    n\ge 0
    \,,
  \end{gather*}
  as well as the decreasing one
  \begin{equation*}
    Y_{n}
    =
    \sup_{\tau_{n}<\tau<t}
    \msw(\{x\in V\mid \unk(x,\tau)> h_{n}\})
    \,.
  \end{equation*}
  Next we apply Chebychev's inequality to find
  \begin{equation}
    \label{eq:linf_jj}
    Y_{n+1}
    \le
    2^{(n+2)q}
    \sigma^{-q}
    k^{-q}
    \sup_{\tau_{n+1}<\tau<t}
    \sum_{x\in V}
    \ppos{\unk(x,\tau)-\bar h_{n}}
    \dgw(x)
    \,.
  \end{equation}
  The right hand side of \eqref{eq:linf_jj} is then majorized by appealing to \eqref{eq:linf_j}, where we select
  \begin{equation*}
    \sigma_{1}
    =
    3\sigma
    2^{-n-2}
    \,,
    \quad
    \sigma_{2}
    =
    \sigma
    2^{-n}
    \,,
  \end{equation*}
  in order to obtain
  \begin{equation}
    \label{eq:linf_jjj}
    Y_{n+1}
    \le
    \gamma
    \sigma^{-\frac{q(p-1)}{p-2}}
    2^{\frac{n(p-2+q)}{p-2}}
    t^{-\frac{q}{p-2}}
    k^{-q}
    \fkf(Y_{n})^{-\frac{q}{p-2}}
    Y_{n}
    \,.
  \end{equation}
  In turn, on invoking our assumption \eqref{eq:fkf_nd}, we transform
  \eqref{eq:linf_jjj} into
  \begin{equation}
    \label{eq:linf_k}
    Y_{n+1}
    \le
    \gamma
    \sigma^{-\frac{q(p-1)}{p-2}}
    2^{\frac{n(p-2+q)}{p-2}}
    t^{-\frac{q}{p-2}}
    k^{-q}
    \fkf(Y_{0})^{-\frac{q}{p-2}}
    Y_{0}^{-\frac{p}{\SpDim}\,\frac{q}{p-2}}
    Y_{n}^{1+\frac{p}{\SpDim}\,\frac{q}{p-2}}
    \,.
  \end{equation}
  This inequality yields $Y_{n}\to0$ as $n\to\infty$ provided we choose $k$ so that (see \cite[Lemma~5.6 Ch.~II]{LSU})
  \begin{equation}
    \label{eq:linf_kk}
    k^{-1}
    t^{-\frac{1}{p-2}}
    \fkf(Y_{0})^{-\frac{1}{p-2}}
    \le
    \gamma_{0}(q,p,\SpDim)
    \,.
  \end{equation}
  In this connection we may assume e.g., $\sigma=1/4$. The proof is concluded when we remark that $Y_{n}\to0$ immediately implies
  \begin{equation*}
    \unk(x,t)
    \le
    k
    \,,
    \qquad
    x\in V
    \,.
  \end{equation*}
\end{proof}

\begin{remark}
  \label{r:cacc}
  We note that the proof of Proposition~\ref{p:linf_meas} makes use of
  the differential equation \eqref{eq:pde} only thru inequality
  \eqref{eq:cacc2_n}. This fact will be used below.
\end{remark}
\begin{proof}[Proof of Corollary~\ref{co:linf_int}]
  We remark on using Chebychev's inequality once more that in \eqref{eq:linf_kk}
  \begin{equation*}
    Y_{0}
    \le
    2^{r}k^{-r}
    \sup_{\frac{t}{4}<\tau<t}
    \sum_{x\in V}
    \unk(x,\tau)^{r}
    \dgw(x)
    \,.
  \end{equation*}
  Let us set
  \begin{equation*}
    E_{r}
    =
    \sup_{0<\tau<t}
    \sum_{x\in V}
    \unk(x,\tau)^{r}
    \dgw(x)
    \,.
  \end{equation*}
  Then \eqref{eq:linf_kk} is certainly fulfilled if
  \begin{equation}
    \label{eq:linf_int_i}
    k^{-1}
    t^{-\frac{1}{p-2}}
    \fkf(
    k^{-r}
    E_{r}
    )^{-\frac{1}{p-2}}
    =
    \gamma_{0}
    \,,
  \end{equation}
  where we also used \eqref{eq:fkf_above}.
  On the other hand, if we set
  \begin{equation*}
    \dcf_{r}(s)
    =
    s^{\frac{p-2}{r}}
    \fkf(s^{-1})
    \,,
    \qquad
    s>0
    \,,
  \end{equation*}
  then \eqref{eq:linf_int_i} amounts to
  \begin{equation}
    \label{eq:linf_int_ii}
    k
    =
    E_{r}^{\frac{1}{r}}
    \big[\dcf_{r}^{(-1)}
    \big(\gamma t^{-1}E_{r}^{-\frac{p-2}{r}}\big)
    \big]^{\frac{1}{r}}
    \le
    \gamma
    E_{r}^{\frac{1}{r}}
    \big[\dcf_{r}^{(-1)}
    \big(t^{-1}E_{r}^{-\frac{p-2}{r}}\big)
    \big]^{\frac{1}{r}}
    \,,
  \end{equation}
  where we have made use of the definition of $\dcf_{r}$ and of \eqref{eq:fkf_below}.
\end{proof}

\section{Proof of Theorem~\ref{t:l1}}
\label{s:l1}

Let $\unk_{0}\in\ell^{1}(V)$, $\unk_{0}\ge 0$. Then we have also
$\unk_{0}\in\ell^{2}(V)$ as noted in the Introduction, and we may
consider the solution $\unk\ge 0$ constructed according to
Subsection~\ref{s:prelim_ex}. First we bound the $\ell^{1}(V)$ norm of
the solution from above. We multiply the equation against
$\varTheta(\unk(x,\tau))\zeta(x)$ where $\zeta$ is as in
Section~\ref{s:prelim},
\begin{equation*}
  \varTheta(\unk)
  =
  \frac{
    \ppos{\unk-h}
  }{
    \unk +\eps
  }
  \,,
\end{equation*}
for any given $h>0$, $\eps>0$, and integrate by parts. The purpose of
the cut at level $h$ is to ease technically the argument. Since
$\varTheta$ is a nondecreasing function, reasoning as in the proof of
Lemma~\ref{l:cacc2} we easily obtain
\begin{multline*}
  \sum_{x\in V}
  \int_{0}^{\unk(x,t)}
  \frac{
    \ppos{s-h}
  }{
    s +\eps
  }
  \zeta(x)
  \dgw(x)
  \le
  \sum_{x\in V}
  \int_{0}^{\unk_{0}(x)}
  \frac{
    \ppos{s-h}
  }{
    s +\eps
  }
  \dgw(x)
  +
  K_{1}
  \\
  \le
  \sum_{x\in V}
  \unk_{0}(x)
  \dgw(x)
  +
  K_{1}
  \,,
\end{multline*}
where
\begin{multline*}
  K_{1}
  =
  \frac{1}{R_{2}-R_{1}}
  \int_{0}^{t}
  \sum_{x\in V}
  \abs{\dif_{y}\unk(x,\tau)}^{p-1}
  \chi_{\{\unk(\tau)>h\}}(x)
  \wgt(x,y)
  \di\tau
  \\
  \le
  \frac{h^{-1}}{R_{2}-R_{1}}
  \int_{0}^{t}
  \sum_{x\in V}
  \abs{\dif_{y}\unk(x,\tau)}^{p-1}
  \unk(x,\tau)
  \wgt(x,y)
  \di\tau
  \,.
\end{multline*}
Then we may proceed as in the proof of Lemma~\ref{l:cacc2} with $q=2$
and let $R_{2}\to \infty$ and then $R_{1}\to \infty$ to make $K_{1}$
vanish. Finally we let first $\eps\to 0$ and then $h\to 0$: on invoking the
monotone convergence theorem we get
\begin{equation}
  \label{eq:l1_above}
  \norma{\unk(t)}{\ell^{1}(V)}
  \le
  \norma{\unk_{0}}{\ell^{1}(V)}
  \,.
\end{equation}
Therefore from Corollary~\ref{co:linf_int} and Remark~\ref{r:dcf} we
infer that \eqref{eq:l1_nn} is satisfied.

In order to prove \eqref{eq:l1_n} we proceed as follows. We multiply
the equation against $\zeta(x)$ as above and integrate by parts
obtaining
\begin{equation}
  \label{eq:l1_j}
  \sum_{x\in V}
  \unk(x,t)
  \zeta(x)
  \dgw(x)
  +
  K_{2}
  =
  \sum_{x\in V}
  \unk_{0}(x)
  \zeta(x)
  \dgw(x)
  \,,
\end{equation}
where
\begin{equation}
  \label{eq:l1_entropy}
  \begin{split}
    \abs{K_{2}}
    &=
    \Abs{
      \int_{0}^{t}
      \sum_{x\in V}
      \abs{\dif_{y}\unk(x,\tau)}^{p-2}
      \dif_{y}\unk(x,\tau)
      \dif_{y}\zeta(x)
      \wgt(x,y)
      \di\tau
    }
    \\
    &\le
    \frac{1}{R_{2}-R_{1}}
    \int_{0}^{t}
    \sum_{x\in B_{R_{2}}+1}
    \abs{\dif_{y}\unk(x,\tau)}^{p-1}
    \wgt(x,y)
    \di\tau
    \\
    &\le
    \frac{2t}{(R_{2}-R_{1})\fkf^{(-1)}(2)^{p-2}}
    \norma{\unk_{0}}{\ell^{1}(V)}^{p-1}
    \,.
  \end{split}
\end{equation}
Here we reasoned as in \eqref{eq:lp_linf} (with $q=1$), exploiting $p>2$ and the
already proved bound \eqref{eq:l1_above}. Then we rewrite \eqref{eq:l1_j} as
\begin{equation*}
  \norma{\unk(t)}{\ell^{1}(V)}
  +
  K_{2}
  \ge
  \sum_{x\in B_{R_{1}}}
  \unk_{0}(x)
  \dgw(x)
  \,,
\end{equation*}
and let first $R_{2}\to \infty$ then $R_{1}\to\infty$ to obtain the
converse to \eqref{eq:l1_above}.

Finally we prove the entropy estimate \eqref{eq:l1_nnn}. First we invoke H\"older's inequality to bound
\begin{equation}
  \label{eq:l1_k}
  \begin{split}
    &I:=
    \int_{0}^{t}
    \sum_{x,y\in V}
    \abs{\dif_{y}\unk(x,\tau)}^{p-1}
    \wgt(x,y)
    \di\tau
    \\
    &\quad\le
    \Big(
    \int_{0}^{t}
    \sum_{x,y\in V}
    \tau^{-\delta(p-1)}
    (\unk(x,\tau)+\unk(y,\tau))^{(2-\theta)(p-1)}
    \wgt(x,y)
    \di\tau
    \Big)^{\frac{1}{p}}
    \\
    &\qquad\times
    \Big(
    \int_{0}^{t}
    \sum_{x,y\in V}
    \tau^{\delta}
    \abs{\dif_{y}\unk(x,\tau)}^{p}
    (\unk(x,\tau)+\unk(y,\tau))^{\theta-2}
    \wgt(x,y)
    \di\tau
    \Big)^{\frac{p-1}{p}}
    \\
    &\quad=:
    K_{3}^{\frac{1}{p}}
    K_{4}^{\frac{p-1}{p}}
    \,.
  \end{split}
\end{equation}
Here $\delta>0$ is to be chosen and we select
\begin{equation*}
  \theta
  =
  \frac{2p-3}{p-1}
  \in(1,2)
  \,,
  \quad
  \text{so that}
  \quad
  (2-\theta)(p-1)
  =
  1
  \,.
\end{equation*}
Thus 
\begin{equation}
  \label{eq:l1_kk}
  K_{3}
  \le
  2
  \int_{0}^{t}
  \tau^{-\delta(p-1)}
  \norma{\unk(\tau)}{\ell^{1}(V)}
  \di\tau
  \le
  \gamma
  \norma{\unk_{0}}{\ell^{1}(V)}
  t^{1- \delta(p-1)}
  \,,
\end{equation}
provided
\begin{equation}
  \label{eq:l1_kkk}
  \delta(p-1)<1
  \,.
\end{equation}
In order to bound $K_{4}$ we multiply the differential equation
against $\tau^{\delta}\unk^{\theta-1}$ and integrate by parts. After
dropping a positive contribution from the left hand side of the
resulting equality we obtain
\begin{equation}
  \label{eq:l1_kj}
  \begin{split}
    K_{4}
    &\le
    \gamma
    \int_{0}^{t}
    \sum_{x\in V}
    \tau^{\delta -1}
    \unk(x,\tau)^{\frac{p-2}{p-1}+1}
    \dgw(x)
    \di\tau
    \\
    &\le
    \gamma
    \norma{\unk_{0}}{\ell^{1}(V)}
    \int_{0}^{t}
    \tau^{\delta-1}
    \norma{\unk(\tau)}{\ell^{\infty}(V)}^{\frac{p-2}{p-1}}
    \di\tau
    \\
    &\le
    \gamma
    \norma{\unk_{0}}{\ell^{1}(V)}^{\frac{p-2}{p-1}+1}
    \int_{0}^{t}
    \tau^{\delta-1}
    \dcf_{1}^{(-1)}(\tau^{-1}\norma{\unk_{0}}{\ell^{1}(V)}^{2-p})^{\frac{p-2}{p-1}}
    \di\tau
    \,.
  \end{split}
\end{equation}
Select now $\nu<\delta<1/(p-1)$, where $\nu$ is the constant defined
in Lemma~\ref{l:dcf}. Accordingly, the last integral above is bounded
by
\begin{multline}
  \label{eq:l1_kjj}
  \int_{0}^{t}
  \tau^{\delta-\nu-1}
  \tau^{\nu}
  \dcf_{1}^{(-1)}(\tau^{-1}\norma{\unk_{0}}{\ell^{1}(V)}^{2-p})^{\frac{p-2}{p-1}}
  \di\tau
  \\
  \le
  t^{\nu}
  \dcf_{1}^{(-1)}(t^{-1}\norma{\unk_{0}}{\ell^{1}(V)}^{2-p})^{\frac{p-2}{p-1}}
  (\delta-\nu)^{-1}
  t^{\delta-\nu}
  \,.
\end{multline}
Collecting all the estimates in \eqref{eq:l1_k}--\eqref{eq:l1_kjj}, we finally arrive at
\begin{equation}
  \label{eq:l1_kkj}
  I
  \le
  \gamma
  t^{\frac{1}{p}}
  \norma{\unk_{0}}{\ell^{1}(V)}^{\frac{2(p-1)}{p}}
  \dcf_{1}^{(-1)}(t^{-1}\norma{\unk_{0}}{\ell^{1}(V)}^{2-p})^{\frac{p-2}{p}}
  \,.
\end{equation}

\section{Proof of Theorem~\ref{p:bbl} and Corollary~\ref{p:bbl2}}
\label{s:bbl}

\begin{proof}[Proof of Theorem~\ref{p:bbl}]
  Let $\unk$ be as in the statement of Theorem~\ref{p:bbl}. For all $t>0$, $R\in\N$ we write
  \begin{equation*}
    \norma{\unk(t)}{\ell^{1}(V)}
    =
    \norma{\unk(t)}{\ell^{1}(B_{R})}
    +
    \norma{\unk(t)}{\ell^{1}(V\setminus B_{R})}
    \,.
  \end{equation*}
  Here we denote for a fixed $x_{0}\in V$
  \begin{equation*}
    B_{R}
    =
    B_{R}(x_{0})
    \,,
    \quad
    \abs{x}
    =
    d(x,x_{0})
    \,,
    \quad
    x\in V
    \,.
  \end{equation*}
  For the sake of clarity let us denote  by $\zeta_{R_{1},R_{2}}$ the cutoff
  function defined in Section~\ref{s:prelim}. Let $\rho>4R$, $R\ge 4$,
  $\rho$, $R\in \N$ and $\phi=1-\zeta_{R,2R}$. We
  use $\abs{x}^{\alpha}\phi(x)\zeta_{\rho,2\rho}(x)$ as a testing function in \eqref{eq:pde},
  for a fixed $0<\alpha<1$. We obtain, assuming in addition that $R$ is
  so large as $\unk_{0}(x)=0$ for $x\not\in B_{R}$,
  \begin{multline*}
    \sum_{x\in V}
    \abs{x}^{\alpha}
    \phi(x)
    \zeta_{\rho,2\rho}(x)
    \unk(x,t)
    \dgw(x)
    \\
    =
    -
    \int_{0}^{t}
    \sum_{x\in V}
    \abs{\dif_{y}\unk(x,\tau)}^{p-2}
    \dif_{y}\unk(x,\tau)
    \dif_{y}[\phi(x)\zeta_{\rho,2\rho}(x)\abs{x}^{\alpha}]
    \wgt(x,y)
    \di\tau
    \,.
  \end{multline*}
  In last integral, the term originating from
  $\dif_{y} \zeta_{\rho,2\rho}$ is seen to become vanishingly small as
  $\rho\to\infty$, since $\alpha<1$, similarly to what we did to bound
  $K_{2}$ in Section~\ref{s:l1}. Thus in the limit $\rho\to\infty$ we get
  \begin{equation*}
    \begin{split}
      &\sum_{x\not\in B_{2R}}
      \abs{x}^{\alpha}
      \unk(x,t)
      \dgw(x)
      \\
      &\quad\le
      -
      \int_{0}^{t}
      \sum_{x\in V}
      \abs{\dif_{y}\unk(x,\tau)}^{p-2}
      \dif_{y}\unk(x,\tau)
      \dif_{y}[\phi(x)\abs{x}^{\alpha}]
      \wgt(x,y)
      \di\tau
      \\
      &\quad=
      \int_{0}^{t}
      \sum_{x\in V}
      \abs{\dif_{y}\unk(x,\tau)}^{p-2}
      \dif_{y}\unk(x,\tau)
      \dif_{y}\zeta_{R,2R}(x)
      \abs{y}^{\alpha}
      \wgt(x,y)
      \di\tau
      \\
      &\qquad-
      \int_{0}^{t}
      \sum_{x\in V}
      \abs{\dif_{y}\unk(x,\tau)}^{p-2}
      \dif_{y}\unk(x,\tau)
      \dif_{y}\abs{x}^{\alpha}
      \phi(x)
      \wgt(x,y)
      \di\tau
      =:
      Q_{1}+Q_{2}
      \,.
    \end{split}
  \end{equation*}
  Since if $x\sim y$, $x\not\in B_{R}$,
  \begin{equation*}
    \abs{\dif_{y}\abs{x}^{\alpha}}
    \le
    \alpha
    \min(\abs{x},\abs{y})^{\alpha-1}
    \le
    \gamma
    R^{\alpha-1}
    \,,
  \end{equation*}
  we have
  \begin{equation*}
    \abs{Q_{1}}
    +
    \abs{Q_{2}}
    \le
    \gamma
    R^{\alpha-1}
    \int_{0}^{t}
    \sum_{x\in V}
    \abs{\dif_{y}\unk(x,\tau)}^{p-1}
    \wgt(x,y)
    \di\tau
    \,.
  \end{equation*}
  We bound the last integral by means of \eqref{eq:l1_nnn}, concluding as follows:
  \begin{multline}
    \label{eq:bbl_ik}
    \sum_{x\not\in B_{2R}}
    \unk(x,t)
    \dgw(x)
    \le
    R^{-\alpha}
    \sum_{x\not\in B_{2R}}
    \abs{x}^{\alpha}
    \unk(x,t)
    \dgw(x)
    \\
    \le
    \gamma
    R^{-1}
    t^{\frac{1}{p}}
    \norma{\unk_{0}}{\ell^{1}(V)}^{\frac{2(p-1)}{p}}
    \dcf_{1}^{(-1)}(t^{-1}\norma{\unk_{0}}{\ell^{1}(V)}^{-(p-2)})^{\frac{p-2}{p}}
    \le
    \gamma \varGamma^{-1}
    \norma{\unk_{0}}{\ell^{1}(V)}
    \,,
  \end{multline}
  where we have selected
  \begin{equation}
    \label{eq:bbl_jk}
    R
    \ge
    R_{p}(\unk_{0},t)
    :=
    \varGamma
    t^{\frac{1}{p}}
    \norma{\unk_{0}}{\ell^{1}(V)}^{\frac{p-2}{p}}
    \dcf_{1}^{(-1)}(t^{-1}\norma{\unk_{0}}{\ell^{1}(V)}^{-(p-2)})^{\frac{p-2}{p}}
    \,,
  \end{equation}
  for a $\varGamma>0$. This together with conservation of mass \eqref{eq:l1_n} proves \eqref{eq:bbl_n}, upon an 
  unessential redefinition of $R$.

  In order to prove \eqref{eq:bbl_p} we remark that from the argument above it follows that for $R$ as in \eqref{eq:bbl_jk},
  \begin{multline*}
    \sum_{x\in V}
    \abs{x}^{\alpha}
    \unk(x,t)
    \dgw(x)
    \le
    (2R)^{\alpha}
    \sum_{x\in B_{2R}}
    \unk(x,t)
    \dgw(x)
    \\
    +
    \sum_{x\not\in B_{2R}}
    \abs{x}^{\alpha}
    \unk(x,t)
    \dgw(x)
    \le
    \gamma
    R^{\alpha}
    \norma{\unk_{0}}{\ell^{1}(V)}
    \,,
  \end{multline*}
  where we have used conservation of mass again.
\end{proof}

\begin{proof}[Proof of Corollary~\ref{p:bbl2}]
  For a suitable choice of $\varGamma$, setting $R=2R_{p}$, $R_{p}$ as in \eqref{eq:bbl_jk},
  we have from \eqref{eq:bbl_ik}
  \begin{equation}
    \label{eq:bbl_ikk}
    \norma{\unk(t)}{\ell^{\infty}(V)}
    \msw(B_{R}(t))
    \ge
    \norma{\unk(t)}{\ell^{1}(B_{R}(t))}
    \ge
    \frac{1}{2}
    \norma{\unk_{0}}{\ell^{1}(V)}
    \,.
  \end{equation}
  The statement in \eqref{eq:bbl_nnn} then follows, if
  \eqref{eq:fkf_bbl} is assumed, on invoking Lemma~\ref{l:bbl}.
\end{proof}

\section{Proof of Theorem~\ref{t:decay}}
\label{s:decay}

We follow here ideas from \cite{Andreucci:1997},
\cite{Andreucci:Cirmi:Leonardi:Tedeev:2001}, \cite{Afanaseva:Tedeev:2004}.
Let $\unk_{R}$ be the solution to \eqref{eq:pde} with initial data
\begin{equation*}
  \unk_{R}(x,0)
  =
  \unk_{0}(x)
  \chi_{B_{R}(x_{0})}(x)
  \,,
  \qquad
  x\in V
  \,.
\end{equation*}
Then mass conservation and \eqref{eq:linf_int_m} with $r=1$ imply
\begin{equation}
  \label{eq:decay_i}
  \norma{\unk_{R}(t)}{\ell^{\infty}(V)}
  \le
  \gamma
  m_{R}
  \dcf_{1}^{(-1)}\big(t^{-1}m_{R}^{-(p-2)}\big)
  \,,
  \qquad
  t>0
  \,,
\end{equation}
where
\begin{equation*}
  m_{R}
  =
  \sum_{x\in B_{R}(x_{0})}
  \unk_{0}(x)
  \dgw(x)
  \,.
\end{equation*}
Let us also define $\unkiii_{R}=\unk-\unk_{R}$; note that
$\unkiii_{R}\ge 0$ by Proposition~\ref{p:compare}. In spite of the fact that
$\unkiii_{R}$ does not solve \eqref{eq:pde} we may still prove the
following inequality for $h\ge 0$, $t>\tau_{1}>\tau_{2}>0$, also by appealing to
Lemma~\ref{l:monot}:
\begin{multline}
  \label{eq:decay_ii}
  \sup_{\tau_{1}<\tau<t}
  \sum_{x\in V}
  \ppos{\unkiii_{R}(x,\tau)-h}^{q}
  \dgw(x)
  +
  \int_{\tau_{1}}^{t}
  \sum_{x,y\in V}
  \Abs{\dif_{y}\ppos{\unkiii_{R}(x,\tau)-h}^{\frac{p+q-2}{p}}}^{p}
  \wgt(x,y)
  \di \tau
  \\
  \le
  \frac{\gamma}{\tau_{1}-\tau_{2}}
  \int_{\tau_{2}}^{t}
  \sum_{x\in V}
  \ppos{\unkiii_{R}(x,\tau)-h}^{q}
  \dgw(x)
  \di\tau
  \,.
\end{multline}
As already observed in Remark~\ref{r:cacc} this is enough for us to
apply Proposition~\ref{p:linf_meas} and thus
Corollary~\ref{co:linf_int} to $\unkiii_{R}$, and get
\begin{equation}
  \label{eq:decay_iii}
  \norma{\unkiii_{R}}{\ell^{\infty}(V)}
  \le
  \gamma
  E_{q}^{\frac{1}{q}}
  \big[\dcf_{q}^{(-1)}
  \big(
  t^{-1}
  E_{q}^{-\frac{p-2}{q}}
  \big)
  \big]^{\frac{1}{q}}
  \le
  \gamma
  E_{q0}^{\frac{1}{q}}
  \big[\dcf_{q}^{(-1)}
  \big(
  t^{-1}
  E_{q0}^{-\frac{p-2}{q}}
  \big)
  \big]^{\frac{1}{q}}
  \,,
\end{equation}
where by invoking a simple variant of \eqref{eq:decay_ii} with $h=0$ we find
\begin{equation*}
  E_{q}
  :=
  \sup_{0<\tau<t}
  \sum_{x\in V}
  \unkiii_{R}(x,\tau)^{q}
  \le
  E_{q0}
  :=
  \sum_{x\not\in B_{R}(x_{0})}
  \unk_{0}(x)^{q}
  \dgw(x)
  \,.
\end{equation*}
We use here also Remark~\ref{r:dcf}.

Thus we have that for all $R>0$, since $\unk=\unk_{R}+\unkiii_{R}$,
\begin{equation}
  \label{eq:decay_j}
  \norma{\unk(t)}{\ell^{\infty}(V)}
  \le
  \gamma
  \Big\{
  m_{R}
  \dcf_{1}^{(-1)}\big(t^{-1}m_{R}^{-(p-2)}\big)
  +
  E_{q0}^{\frac{1}{q}}
  \big[\dcf_{q}^{(-1)}
  \big(
  t^{-1}
  E_{q0}^{-\frac{p-2}{q}}
  \big)
  \big]^{\frac{1}{q}}
  \Big\}
  \,.
\end{equation}
The first term on the right hand side of \eqref{eq:decay_j} is
increasing in $R$, while the second one is decreasing.  We aim at
making them equal, but this is in general impossible in the discrete
setting of graphs. We instead select $R$ as any number (optimally the
minimum one) such that
\begin{equation}
  \label{eq:decay_kk}
  m_{R}
  \dcf_{1}^{(-1)}\big(t^{-1}m_{R}^{-(p-2)}\big)
  \ge 
  E_{q0}^{\frac{1}{q}}
  \big[\dcf_{q}^{(-1)}
  \big(
  t^{-1}
  E_{q0}^{-\frac{p-2}{q}}
  \big)
  \big]^{\frac{1}{q}}
  \,.
\end{equation}
Then \eqref{eq:decay_n} is proved under assumption \eqref{eq:decay_kk}.

We need to make \eqref{eq:decay_kk} explicit. First, we define
\begin{equation*}
  X_{1}
  =
  \dcf_{1}^{(-1)}(t^{-1}m_{R}^{-(p-2)})
  \,,
  \qquad
  X_{q}
  =
  \dcf_{q}^{(-1)}(t^{-1}E_{q0}^{-\frac{p-2}{q}})
  \,,
\end{equation*}
so that from the definition of
$\dcf_{r}$, we get
\begin{equation}
  \label{eq:decay_k}
  X_{1}
  =
  t^{-\frac{1}{p-2}}
  m_{R}^{-1}
  \fkf(X_{1}^{-1})^{-\frac{1}{p-2}}
  \,,
  \qquad
  X_{q}^{\frac{1}{q}}
  =
  t^{-\frac{1}{p-2}}
  E_{q0}^{-\frac{1}{q}}
  \fkf(X_{q}^{-1})^{-\frac{1}{p-2}}
  \,.
\end{equation}
Therefore \eqref{eq:decay_kk} can be written as
\begin{equation}
  \label{eq:decay_kkk}
  \fkf(X_{1}^{-1})
  \le
  \fkf(X_{q}^{-1})
  \,,
  \quad
  \text{that is}
  \quad
  X_{1}
  \le
  X_{q}
  \,.
\end{equation}
We apply $\dcf_{1}$ and write the last inequality in the form
\begin{equation*}
  \begin{split}
    (tm_{R}^{p-2})^{-1}
    &\le
    \dcf_{1}(X_{q})
    =
    X_{q}^{p-2}
    \fkf(X_{q}^{-1})
    \\
    &=
    X_{q}^{\frac{(p-2)(q-1)}{q}}
    \dcf_{q}(X_{q})
    =
    X_{q}^{\frac{(p-2)(q-1)}{q}}
    (tE_{q0}^{\frac{p-2}{q}})^{-1}
    \,.
  \end{split}
\end{equation*}
From here we immediately get, on recalling the definition of $\dcf_{q}$,
\begin{equation*}
  \frac{1}{t}
  \ge
  \Big[
  \frac{E_{q0}}{m_{R}}
  \Big]^{\frac{p-2}{q-1}}
  \fkf\Big(
  {m_{R}^{\frac{q}{q-1}}}
  {E_{q0}^{-\frac{1}{q-1}}}
  \Big)
  \,.
\end{equation*}
This amounts to \eqref{eq:decay_nn} concluding the proof.

\bibliographystyle{abbrv}
\bibliography{paraboli,pubblicazioni_andreucci}
\end{document}